\documentclass[11pt]{amsart}
\usepackage{amsmath,amsthm,amssymb,amscd,url}
\usepackage{amsmath,amsthm,MnSymbol,amsfonts}
\usepackage{verbatim}
\usepackage[all]{xy} \SelectTips {cm}{}
\usepackage{tikz}
\usetikzlibrary{matrix,arrows,decorations.pathreplacing,decorations.pathmorphing}
\usepackage{amsmath,amsthm,amssymb,amscd,url}

\usepackage{verbatim}
\usepackage[all]{xy} \SelectTips {cm}{}
\usepackage{tikz}
\usetikzlibrary{matrix,arrows,decorations.pathreplacing,decorations.pathmorphing}

\newtheorem{theorem}{Theorem}[section] 
\newtheorem{lemma}[theorem]{Lemma}     
\newtheorem{corollary}[theorem]{Corollary}

\newtheorem{proposition}[theorem]{Proposition}

\newtheorem{example}[theorem]{Example}
\theoremstyle{remark}
\newtheorem{remark}[theorem]{Remark}
\theoremstyle{definition}
\newtheorem{definition}[theorem]{Definition}
\usepackage{amsfonts,url}

\newcommand{\q}{/\!/}

\def\Irr{mathbf{Irr}}

\renewcommand{\k}{\ensuremath{k}}
\newcommand{\CC}{\ensuremath{\mathbb{C}}}

\newcommand{\OOO}{\ensuremath{\mathcal{O}}}

\newcommand{\Hom}{\ensuremath{\mathrm{Hom}}}
\def\Irr{\mathrm{Irr}}

\def\op{\mathrm{op}}
\def\Spec{\mathrm{Spec}}
\def\maxSpec{\mathrm{maxSpec}}

\def\Mat{\mathrm{M}}
\newcommand{\Prim}{\ensuremath{\mathrm{Prim}}}

\def\fE{\mathfrak{E}}

\DeclareMathOperator{\HP}{HP}

\newcommand{\Z}{\mathbb{Z}}

\newcommand{\C}{\mathbb{C}}

\newcommand{\cH}{\mathcal{H}}

\newcommand{\cO}{\mathcal{O}}
\newcommand\cI{\mathcal{I}}

\newcommand{\SL}{\mathrm{SL}}

\newcommand{\GL}{\mathrm{GL}}

\begin{document}
\title{Morita equivalence for $k$-algebras}
\keywords{Finite type algebras, affine varieties, Morita equivalence, stratified equivalence}

\begin{abstract}   We review Morita equivalence for finite type $k$-algebras $A$ and also a weakening of Morita equivalence which we call 
\emph{stratified equivalence}.   
The spectrum of $A$ is the set of equivalence classes of irreducible $A$-modules.  For any finite type $k$-algebra $A$, the spectrum  of $A$ is in bijection with the set of primitive ideals of $A$. 
The stratified equivalence relation preserves the spectrum of $A$
 and also preserves the periodic cyclic homology of $A$. However, the stratified equivalence relation
permits a tearing apart of strata in the primitive ideal space which is not allowed by Morita equivalence.
A key example illustrating the distinction between Morita equivalence and stratified equivalence  is provided by affine Hecke algebras
associated to affine Weyl groups.
\end{abstract}

\author[A.-M. Aubert]{Anne-Marie Aubert}
\address{CNRS, Sorbonne Universit\'e, Universit\'e paris Diderot, Institut de Math\'ematiques de Jussieu -- Paris Rive Gauche, IMJ-PRG, F-75005 Paris, France}
\email{anne-marie.aubert@imj-prg.fr}
\author[P. Baum]{Paul Baum}
\address{Mathematics Department, Pennsylvania State University, University Park, PA 16802, USA}
\email{pxb6@psu.edu}
\author[R. Plymen]{Roger Plymen}
\address{
School of Mathematics, Southampton University, Southampton SO17 1BJ,  England 
\emph{and} 
School of Mathematics, Manchester University, Manchester M13 9PL, England}
\email{
r.j.plymen@soton.ac.uk \quad 
roger.j.plymen@manchester.ac.uk}
\author[M. Solleveld]{Maarten Solleveld}
\address{IMAPP, Radboud Universiteit Nijmegen, Heyendaalseweg 135, 
6525AJ Nijmegen, the Netherlands}
\email{m.solleveld@science.ru.nl}

\date{\today}
\subjclass[2010]{14A22, 16G30, 20C08, 33D80}
\keywords{Morita equivalence, $k$-algebras, affine Hecke algebras}
\maketitle

\tableofcontents

\section{Introduction}   The subject of Morita equivalence begins with the classical paper of Morita \cite{Mor}.   Morita equivalence of rings is  well-established and  widely used \cite[\S18--19]{Lam}.  
   
Now let $k$ be the coordinate algebra of a complex affine variety.   Equivalently, $k$ is a unital algebra over the complex numbers which 
is commutative, finitely generated, and nilpotent-free. Morita equivalence for $k$-algebras, the starting-point  of this article,  is a more recent development, which introduces several nuances 
related to the $k$-module structure of the algebras.

  A $k$-algebra is an algebra $A$ over the complex numbers $\mathbb{C}$ which is a $k$-module
(with an evident compatibility between the algebra structure of $A$ and the $k$-module structure of $A$). $A$ is not required to have a unit. $A$ is not required
to be commutative.
A $k$-algebra $A$ is of finite type if as a $k$-module
$A$ is finitely generated.   The spectrum of $A$ is, by definition, the set of equivalence classes of irreducible $A$-modules. For any finite type $k$-algebra $A$, the spectrum  of $A$ is in bijection with the set of primitive ideals of $A$.

The set of primitive ideals of $A$ will be denoted $\Prim(A)$.   When $A$ is unital and commutative, let $\maxSpec(A)$ denote the set of maximal ideals in $A$, and let $\Spec(A)$ denote  
the set of prime ideals in $A$.      In particular, let $A = k$.   Then we have
\[
\Prim(A) \simeq \maxSpec(A) \subset \Spec(A).
\]
The inclusion is strict, for the prime spectrum $\Spec(A)$ contains a generic point, namely the zero ideal $\mathbf{0}$ in $A$.   The set $\Prim(A)$ 
consists of the $\C$-rational points of the affine variety underlying $k$.

So $\Prim(A)$ is a generalisation, for $k$-algebras $A$ of finite type,  of the maximal ideal spectrum of a unital commutative ring,
rather than the prime spectrum of a unital commutative ring.  For a finite type $k$-algebra $A$, the zero ideal $\mathbf{0}$ is a primitive ideal if and only if
$k = \C$ and $A = M_n(\C)$ the algebra of all $n$ by $n$ matrices with entries in $\C$.
Quite generally, any irreducible representation of a finite type $k$-algebra $A$ is a surjection of $A$ onto $M_n(\C)$ for some $n$.   This implies that any primitive 
ideal is maximal.  

In some situations, Morita equivalence can be too strong and we are led to introduce a  weakening of this concept, which we call  \emph{stratified equivalence}.

The stratified equivalence relation preserves the spectrum of $A$
 and also preserves the periodic cyclic homology of $A$. In addition, the stratified equivalence relation
permits a tearing apart of \emph{strata} in the primitive ideal space which is not allowed by Morita equivalence.

 Denote by $\Gamma$ a finite group.   In \S 10, we show that stratified equivalence persists under the formation of tensor products 
$A \otimes B$, and the 
formation of crossed products $A \rtimes \Gamma$.

A key example illustrating the distinction between Morita equivalence and stratified equivalence  is provided by affine Hecke algebras.  Let $(X,Y,R,R^\vee)$ be a root 
datum in the standard sense \cite{L}.   This root datum delivers the following items: 

-- a finite Weyl group $W_0$

--  an extended affine Weyl group $W_0X: = W_0 \ltimes X$

-- for each $q \in \C^\times$, an affine Hecke algebra $\cH_q$

-- a complex torus $T: = \Hom(X, \C^\times)$   

-- a complex variety $X: = T/W_0$

-- a canonical isomorphism $\cO(X) \simeq Z(\cH_q)$
\medskip

Set $k: = \cO(X)$.   Then, for all $q \in\mathbb{C}^{\times}$, $\mathcal{H}_q$ is a unital finite type $k$-algebra.   
If $q = 1$, then $\cH_1$ is the group algebra of the extended affine Weyl group:
\[
\cH_1 = \C[W_0X]
\]

\begin{theorem}   Except for $q$ in a finite set of roots of
unity, none of which is 1,  $\cH_q$ and $\cH_1$ are stratified equivalent as $k$-algebras.   If $q \neq1$ then $\cH_q$ and $\cH_1$ are not Morita equivalent as $k$-algebras.   
\end{theorem}

A finite type $k$-algebra can be viewed as a noncommutative complex affine variety. So the setting of this article can be viewed as noncommutative algebraic geometry.   

We conclude this article with a detailed account of the affine Hecke algebra $\cH_q$ attached to the Lie group $\SL_2(\C)$.   

For an application of stratified equivalence to the representation theory of $p$-adic groups, see \cite{ABPS}.

\section{$k$-algebras}  If $X$ is an affine algebraic variety over the complex numbers $\CC$, then $\mathcal{O}(X)$ will denote the coordinate algebra of $X$.
 Set  $\k=\mathcal{O}(X)$.   Equivalently, $k$ is a unital algebra over the complex numbers which 
is unital, commutative, finitely generated, and nilpotent-free. The Hilbert Nullstellensatz implies that there is an equivalence of categories 
\vspace{3mm}\\
\begin{center}
\begin{tikzpicture}[node distance=.5cm]
\node[text width=4.5cm, left delimiter=(, right delimiter=),left] at (-.6,0)
{unital commutative finitely generated nilpotent-free \mbox{$\CC$-algebras}};
\node at (0,0){$\sim$};
\node[text width=3.0cm,left delimiter=(, right delimiter=),right] at (.6,0)
{affine complex algebraic varieties};
\node at (4.4,.5) {$\op$};
\end{tikzpicture}\\
$\mathcal{O}(X)\mapsto X$
\end{center}
\vspace{4mm}
Here ``op'' denotes the opposite category.

\begin{definition}
A \emph{$\k$-algebra} is a $\CC$-algebra $A$ such that $A$ is a unital (left) $\k$-module with:
\[
\lambda(\omega a)=\omega(\lambda a)=(\lambda\omega)a\quad \forall (\lambda,\omega,a)\in\CC \times \k \times A
\]
and
\[
\omega(a_1a_2)=(\omega a_1)a_2=a_1(\omega a_2)\quad \forall (\omega,a_1,a_2)\in \k\times A\times A.
\] \vspace{2mm}
\end{definition}
\begin{remark} $A$ is not required to have a unit.
\end{remark} 

The centre of $A$ will be denoted $Z(A)$:
\[
Z(A):=\{c\in A\mid ca=ac\,\,\forall a\in A\}.
\]

\begin{remark}
Let $A$ be a unital $\k$-algebra. Denote the unit of $A$ by $1_A$.\\
The map $\omega\mapsto \omega.1_A$  is then a unital morphism of $\CC$-algebras\\
\[
k\longrightarrow Z(A)
\]
i.e. unital $\k$-algebra = unital $\CC$-algebra $A$ with a given unital morphism of $\CC$-algebras
\[
\k\longrightarrow Z(A)
\]

If $A$ does not have a unit, a $k$-structure is equivalent to a unital morphism of $\C$-algebras
\[
k \to Z(\mathcal{M}A)
\]
where $\mathcal{M} A$ is the multiplier algebra of $A$.   

\end{remark}

\begin{definition}
Let $A,B$ be two $\k$-algebras. A \emph{morphism of \k-algebras} is a morphism of $\CC$-algebras
\[
f\colon A\to B
\]
which is also a morphism of (left) $\k$-modules,
\[
f(\omega a)=\omega f(a)\quad \forall (\omega,a)\in \k\times A.
\]
\end{definition}
\begin{definition}
Let $A$ be a $\k$-algebra. A \emph{representation} of $A$ [or a (left) \emph{$A$-module}] is a $\CC$-vector space $V$ with given morphisms of $\CC$-algebras
\[
A\longrightarrow \mathrm{Hom}_\CC(V,V)
\]
\[
\k\longrightarrow \mathrm{Hom}_\CC(V,V)
 \] 
such that 
\begin{enumerate}
\item $\k\longrightarrow \mathrm{Hom}_\CC(V,V)$ is unital.
\item $(\omega a)v=\omega(av)=a(\omega v)$ \;$\forall (\omega,a,v)\in k\times A\times V$.
\end{enumerate}
\end{definition}
\noindent\emph{From now on in this article, $A$ will denote a $k$-algebra.}    
\bigskip

\noindent A representation of $A$ 
\[
A\longrightarrow \mathrm{Hom}_\CC(V,V)
\]
\[
\k\longrightarrow \mathrm{Hom}_\CC(V,V)
 \] 
will often be denoted 
\[
A\to\mathrm{Hom}_{\mathbb{C}}(V, V)
\]
it being understood that the action of $k$ on $V$
\[
k\to\mathrm{Hom}_{\mathbb{C}}(V, V)
\]
is part of the given structure.

\begin{definition}
A representation $\varphi\colon A\to \mathrm{Hom}_\CC(V,V)$ is 
\emph{non-degenerate} if $AV=V$, i.e. for any $v\in V$, there are 
$v_1$, $v_2$, $\ldots$, $v_r$ in $V$ and $a_1$, $a_2$, $\ldots$, $a_r$ in $A$ with 
\[
v=a_1v_1+a_2v_2+\cdots+a_rv_r.\vspace{3mm}
\]
\end{definition}
\begin{definition}
A representation $\varphi\colon A\to \mathrm{Hom}_\CC(V,V)$ is \emph{irreducible} if  $AV\ne\{0\}$ and there is no sub-$\CC$-vector space $W$ of $V$ with:
\[
\{0\}\ne W\:\:, \:\:W\ne V
\]
and 
\[
\omega w\in W\quad\forall(\omega, w)\in \k\times W
\]
and
\[
aw\in W\quad\forall(a, w)\in A \times W
\]
\end{definition}

\begin{definition}
Two representations of the $\k$-algebra $A$
\begin{align*}
\varphi_1\colon A\to \mathrm{Hom}_\CC(V_1,V_1)\\
\varphi_2\colon A\to \mathrm{Hom}_\CC(V_2,V_2)
\end{align*}
are \emph{equivalent} if there is an isomorphism of $\CC$-vector spaces\vspace{2mm}
\setlength{\abovedisplayskip}{0pt}
\setlength{\belowdisplayskip}{0pt}
\[
T\colon V_1\to V_2
\]
with
\[
T(av)=aT(v)\quad \forall\,(a,v)\in A\times V
\]
and 
\[
T(\omega v)=\omega T(v)\quad \forall\,(\omega,v)\in \k\times V\vspace{3mm}
\]

\noindent The \emph{spectrum} of $A$, also denoted Irr($A$), is the set of equivalence classes of irreducible representations of $A$.
\[
\mathrm{Irr}(A):=\{\mathrm{Irreducible\; representations\; of\; A}\}/\sim.
\]
\end{definition}

It can happen that the spectrum is empty.  For example, let $A$ comprise all $2$ by $2$ matrices of the form 
\[
 \left(
\begin{array}{cc}
0 & x\\
0 & 0
\end{array}
\right)
\]
with $x \in \C$.   Then $A$ is a commutative non-unital algebra of finite type (with $k = \C$) such that  $\Irr(A) = \emptyset$.

\section{On the action of $k$}

\noindent For a $k$-algebra $A$, $A_{\mathbb{C}}$ denotes the underlying $\mathbb{C}$-algebra of $A$. Then 
$A_{\mathbb{C}}$ is obtained from $A$ by forgetting the action of $k$ on $A$.
For $A_\mathbb{C}$ there are the usual definitions :  
A \emph{representation} of $A_{\mathbb{C}}$ [or a (left) \emph{$A_{\mathbb{C}}$-module}] is a $\CC$-vector space $V$ with a given morphism of $\CC$-algebras
\[
A_{\mathbb{C}}\longrightarrow \mathrm{Hom}_\CC(V,V)\vspace{3mm}
\]
An $A_{\mathbb{C}}$-module $V$ is \emph{irreducible} if  $A_{\mathbb{C}}V\ne\{0\}$ and there is no sub-$\CC$-vector space $W$ of $V$ with:
\[
\{0\}\ne W\:\:, \:\:W\ne V
\]
and 
\[
aw\in W\quad\forall(a, w)\in A_{\mathbb{C}} \times W\vspace{3mm}
\]
Two representations of  $A_{\mathbb{C}}$
\begin{align*}
\varphi_1\colon A\to \mathrm{Hom}_\CC(V_1,V_1)\\
\varphi_2\colon A\to \mathrm{Hom}_\CC(V_2,V_2)
\end{align*}
are \emph{equivalent} if there is an isomorphism of $\CC$-vector spaces\vspace{2mm}
\setlength{\abovedisplayskip}{0pt}
\setlength{\belowdisplayskip}{0pt}
\[
T\colon V_1\to V_2
\]
with
\[
T(av)=aT(v)\quad \forall\,(a,v)\in A\times V\vspace{3mm}
\]

\noindent Irr($A_{\mathbb{C}}$):=\{Irreducible representations of $A_{\mathbb{C}}$\}/$\sim$.\vspace{3mm}\\

\noindent An $A_{\mathbb{C}}$-module $V$ for which the following two properties are valid is \emph{strictly non-degenerate} 
\begin{itemize}
\item $A_{\mathbb{C}}V=V$
\item If $v\in V$ has $av=0\:\:\forall a\in A_{\mathbb{C}}$, then $v=0.$

\end{itemize}
\begin{lemma}
Any irreducible $A_{\mathbb{C}}$-module is strictly non-degenerate. 
\end{lemma}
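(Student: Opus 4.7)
The plan is to verify the two defining conditions of strict non-degeneracy by producing, in each case, an $A_{\mathbb{C}}$-stable $\mathbb{C}$-subspace of $V$ and invoking the irreducibility dichotomy (every $A_{\mathbb{C}}$-stable subspace is either $\{0\}$ or $V$).

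For the first condition, I would consider the $\mathbb{C}$-subspace $A_{\mathbb{C}}V \subseteq V$ spanned by all elements of the form $av$ with $a \in A_{\mathbb{C}}$ and $v \in V$. Associativity of the action gives $a'(av) = (a'a)v \in A_{\mathbb{C}}V$, so $A_{\mathbb{C}}V$ is closed under the action of $A_{\mathbb{C}}$. By the definition of irreducibility we have $A_{\mathbb{C}}V \neq \{0\}$, so the only option left by irreducibility is $A_{\mathbb{C}}V = V$.

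For the second condition, I would introduce the annihilated subspace
\[
W := \{v \in V \mid av = 0 \text{ for all } a \in A_{\mathbb{C}}\}.
\]
This is a $\mathbb{C}$-subspace, and if $v \in W$ and $a \in A_{\mathbb{C}}$ then for every $b \in A_{\mathbb{C}}$ we have $b(av) = (ba)v = 0$, so $av \in W$; hence $W$ is an $A_{\mathbb{C}}$-submodule. By irreducibility either $W = V$ or $W = \{0\}$. The first alternative would force $A_{\mathbb{C}}V = \{0\}$, contradicting the standing hypothesis $A_{\mathbb{C}}V \neq \{0\}$ built into irreducibility. Therefore $W = \{0\}$, which is exactly the second condition of strict non-degeneracy.

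There is no genuine obstacle here; the only point that needs care is checking that the two subspaces produced really are $A_{\mathbb{C}}$-stable (the closure of $A_{\mathbb{C}}V$ under the action, and the fact that $W$ is a submodule rather than merely a subspace), since without that the irreducibility dichotomy does not apply. Once that is in place, both conclusions follow immediately from the trichotomy between $\{0\}$, $V$, and the possibility of a proper nonzero submodule being ruled out.
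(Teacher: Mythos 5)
Your proof is correct and follows essentially the same route as the paper: consider the $A_{\mathbb{C}}$-stable subspaces $A_{\mathbb{C}}V$ and the annihilator $W$, and apply the irreducibility dichotomy to each, using the built-in hypothesis $A_{\mathbb{C}}V \neq \{0\}$ to rule out the degenerate alternatives. The only difference is that you spell out the stability checks in more detail than the paper does, which is fine.
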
 

\begin{proof} Let $V$ be an irreducible $A_{\mathbb{C}}$-module.
First, consider $A_{\mathbb{C}}V\subset V$. $A_{\mathbb{C}}V$ is preserved by the action of $A_{\mathbb{C}}$
on $V$. Cannot have $A_{\mathbb{C}}V = \{0\}$ since this would contradict the irreducibility of $V$. Therefore  $A_{\mathbb{C}}V = V.$\\
Next, set 
\[
W = \{v\in V | av=0\:\:\forall a\in A_{\mathbb{C}}\}
\]
$W$ is preserved by the action of $A_{\mathbb{C}}$ on $V$. $W$ cannot be equal to $V$ since this would imply $A_{\mathbb{C}}V=\{0\}.$
Hence $W=\{0\}.$\hspace{53mm}
\end{proof}

\begin{lemma}\label{k-action}  Let $A$ be a $k$-algebra, and let $V$ be a strictly non-degenerate $A_{\mathbb{C}}$-module.
Then there is a unique unital morphism of $\mathbb{C}$ algebras
\[
k\to\mathrm{Hom}_{\mathbb{C}}(V, V)
\]
which makes $V$ an $A$-module.\\
\end{lemma}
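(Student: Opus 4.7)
Plan of proof. My plan is to construct the $k$-action on $V$ by the only formula that can possibly work. Given $v\in V$, strict non-degeneracy gives $A_\CC V=V$, so $v$ can be written $v=\sum_i a_i v_i$ with $a_i\in A_\CC$, $v_i\in V$; I set
\[
\omega v:=\sum_i (\omega a_i)v_i \qquad (\omega\in k).
\]
This is the forced choice: in any $A$-module structure compatible with the given $A_\CC$-action, the identity $\omega(av)=(\omega a)v$ dictates precisely this formula on vectors of the form $av$, and hence on all of $V=A_\CC V$ by $\CC$-linearity. This observation also immediately yields the uniqueness half of the lemma.

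The heart of the argument, and the step I expect to be the main obstacle, is well-definedness: if $\sum_i a_i v_i=0$ in $V$, then $\sum_i (\omega a_i) v_i=0$. Here both halves of strict non-degeneracy will be used. The trick is not to prove the equality directly, but to show that every $c\in A_\CC$ annihilates the right-hand sum, and then invoke the second defining property of strict non-degeneracy to conclude. Using the $k$-algebra compatibility $c(\omega a_i)=\omega(ca_i)=(\omega c)a_i$, one computes
\[
c\cdot\sum_i (\omega a_i)v_i=\sum_i\bigl(c(\omega a_i)\bigr)v_i=\sum_i\bigl((\omega c)a_i\bigr)v_i=(\omega c)\cdot\sum_i a_iv_i=0,
\]
so $\sum_i(\omega a_i)v_i$ is killed by every $c\in A_\CC$ and is therefore $0$.

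Once well-definedness is in hand, the remaining verifications are routine manipulations of the axioms, which I would carry out in this order. First, the map $T_\omega\colon V\to V$, $v\mapsto\omega v$, is $\CC$-linear because its defining formula is $\CC$-linear in each summand $(a_i,v_i)$ (using that the $k$-action on $A$ satisfies $\lambda(\omega a)=\omega(\lambda a)$). Second, $\omega\mapsto T_\omega$ is a unital $\CC$-algebra homomorphism $k\to\Hom_\CC(V,V)$: one has $T_1=\identity_V$ because $1\cdot a=a$, and for $\omega_1,\omega_2\in k$ the computation
\[
T_{\omega_1}T_{\omega_2}(av)=T_{\omega_1}\bigl((\omega_2 a)v\bigr)=(\omega_1\omega_2 a)v=T_{\omega_1\omega_2}(av)
\]
pins down multiplicativity on the generating set $\{av:a\in A,\,v\in V\}=V$. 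Third, the two $A$-module compatibilities $\omega(av)=(\omega a)v$ and $a(\omega v)=\omega(av)$ hold: the first is built into the definition of $T_\omega$, and the second is obtained by writing $v=\sum_i b_i w_i$ and applying the identity $a(\omega b_i)=\omega(ab_i)$ term by term. This completes the construction of the promised unital morphism $k\to\Hom_\CC(V,V)$ and the $A$-module structure on $V$.
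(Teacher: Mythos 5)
Your proposal is correct and follows the same approach as the paper: define $\omega v$ via a decomposition $v=\sum a_i v_i$ coming from $A_\CC V=V$, and use the second half of strict non-degeneracy to establish well-definedness. The paper's proof is terser (it simply asserts that the second condition gives well-definedness, without the $c\cdot\sum_i(\omega a_i)v_i=(\omega c)\cdot\sum_i a_iv_i$ computation you spell out), but your elaboration is exactly the intended argument, and in fact matches the fuller version of this computation that appears later in the paper's proof of Proposition 8.5.
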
   

\begin{proof} Given $v\in V$, choose  $v_1,v_2,\ldots,v_r\in V$ and $a_1,a_2,\ldots,a_r\in A$ with 
\[
v=a_1v_1+a_2v_2+\cdots+a_rv_r
\]
For $\omega\in k$, define $\omega v$ by :
\[
\omega v=(\omega a_1)v_1+(\omega a_2)v_2+\cdots+(\omega a_r)v_r
\]
The second condition in the definition of strictly non-degenerate implies that $\omega v$ is well-defined.\hspace{90mm} 
\end{proof}

Lemma \ref{k-action}  will be referred to as the ``$k$-action for free lemma".

 If $V$ is an $A$-module, $V_{\mathbb{C}}$ will denote the underlying $A_{\mathbb{C}}$-module. $V_{\mathbb{C}}$ is 
obtained from $V$ by forgetting the action of $k$ on $V$.
\begin{lemma} If $V$ is any irreducible $A$-module, then $V_{\mathbb{C}}$ is an irreducible $A_{\mathbb{C}}$-module.

\end{lemma}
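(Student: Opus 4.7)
The plan is to verify the two defining conditions of irreducibility of $V_{\mathbb{C}}$ as an $A_{\mathbb{C}}$-module directly. The first condition $A_{\mathbb{C}} V_{\mathbb{C}} \ne \{0\}$ is immediate: $A_{\mathbb{C}} V_{\mathbb{C}} = AV$, and $AV \ne \{0\}$ by the irreducibility of $V$ as an $A$-module. So the real content is the following: given a $\mathbb{C}$-subspace $W \subseteq V$ with $\{0\} \ne W$ and $A W \subseteq W$, I must show $W = V$.

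The key observation is that although $W$ may not be $k$-invariant, the subspace $AW$ automatically is. Indeed, for $\omega \in k$, $a \in A$, $w \in W$, the compatibility axiom gives $\omega(aw) = (\omega a)w \in A W$, since $\omega a \in A$. Also $A(AW) = A^2 W \subseteq AW$, so $AW$ is both $A$-stable and $k$-stable, i.e. an $A$-submodule of $V$. By the irreducibility of $V$ as an $A$-module, $AW$ is either $\{0\}$ or $V$.

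If $AW = V$, then $W \supseteq AW = V$, so $W = V$ and we are done. If instead $AW = \{0\}$, consider $N := \{v \in V \mid av = 0 \ \forall a \in A\}$. Then $N \supseteq W \ne \{0\}$. By an argument parallel to the previous paragraph, $N$ is an $A$-submodule of $V$: it is trivially $A$-invariant, and for $\omega \in k$, $v \in N$, $a \in A$ one has $a(\omega v) = \omega(av) = 0$, so $\omega v \in N$ as well. By irreducibility $N \in \{\{0\},V\}$, but $N = V$ would force $AV = \{0\}$, contradicting the first condition. Hence $N = \{0\}$, contradicting $W \ne \{0\}$.

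The main point to keep in mind is that $A$ need not be unital, so one cannot simply conclude $AW = W$; the above dichotomy sidesteps this by using the $k$-stability of $AW$ to promote it to a full $A$-submodule, and then uses the strict non-degeneracy implicit in the irreducibility of $V$ to rule out the degenerate case. No other obstacles are expected.
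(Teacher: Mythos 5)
Your proof is correct and follows essentially the same strategy as the paper's: both arguments hinge on the observation that $AW$ is automatically $k$-stable (via $\omega(aw)=(\omega a)w$) and hence a genuine $A$-submodule, and both then dispose of the degenerate case $AW=\{0\}$ by producing a larger $A$-submodule that is annihilated by $A$. The only difference is cosmetic: in the case $AW=\{0\}$ the paper works with $kW$ (which is $A$-stable because $A$ kills it and $\supseteq W$ since $k$ is unital), whereas you pass directly to the full annihilator $N=\{v: Av=0\}$; since $W\subseteq kW\subseteq N$, the two auxiliary submodules serve the same purpose and reach the same contradiction.
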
 

\begin{proof}  Suppose that $V_{\mathbb{C}}$ is not an irreducible $A_{\mathbb{C}}$-module. Then there is a sub-$\mathbb{C}$-vector
space $W$ of $V$ with: 
\[ { 0 } \neq W , \quad W \neq V 
\]
and
\[
aw\in W\quad\;\;\forall (a, w)\in A\times W
\]
Consider $AW\subset W$. $AW$ is preserved by both the $A$-action on $V$ and the $k$-action on $V$. Thus if $AW\neq\{0\}$, then $V$ is not an irreducible
$A$-module. Hence $AW=\{0\}$. Consider $kW\supset W$. $kW$ is preserved by the $k$-action on $V$ and is also preserved by the $A$-action on $V$ because
$A$ annihilates $kW$. Since $A$ annihilates $kW$, cannot have $kW = V$. Therefore $\{0\}\neq kW$, $kW\neq V$, which contradicts the irreducibility of the $A$-module $V$. 
\end{proof}

\noindent A corollary of Lemma \ref{k-action} is :
\begin{corollary}
For any $k$-algebra $A$,  the map\vspace{2mm} 
\[
\mathrm{Irr}(A)\rightarrow \mathrm{Irr}(A_{\mathbb{C}})
\]
\[
V\mapsto V_{\mathbb{C}}
\]
is a bijection.
\end{corollary}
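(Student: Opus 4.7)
The plan is to use the preceding three lemmas as the machinery and produce an explicit two-sided inverse, then check that everything is compatible.

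\emph{Well-definedness and surjectivity.} The assignment $V\mapsto V_{\mathbb{C}}$ is well-defined because the previous lemma shows that $V_{\mathbb{C}}$ is irreducible over $A_{\mathbb{C}}$ whenever $V$ is irreducible over $A$. For surjectivity, start with an irreducible $A_{\mathbb{C}}$-module $W$. The first lemma of this section tells us $W$ is strictly non-degenerate, so the $k$-action for free lemma (Lemma \ref{k-action}) supplies a unique unital morphism $k\to \mathrm{Hom}_{\mathbb{C}}(W,W)$ that promotes $W$ to an $A$-module, call it $\widetilde W$. I claim $\widetilde W$ is irreducible as an $A$-module. Indeed any sub-$\mathbb{C}$-vector space of $\widetilde W$ stable under both $A$ and $k$ is in particular stable under $A_{\mathbb{C}}$, hence it must be $\{0\}$ or $W$ by irreducibility of $W$ over $A_{\mathbb{C}}$; and $A\widetilde W=A_{\mathbb{C}}W=W\neq 0$. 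Then $(\widetilde W)_{\mathbb{C}}=W$ by construction.

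\emph{Injectivity.} Suppose $V_1,V_2$ are irreducible $A$-modules with $(V_1)_{\mathbb{C}}\cong (V_2)_{\mathbb{C}}$ via some $\mathbb{C}$-linear bijection $T\colon V_1\to V_2$ satisfying $T(av)=aT(v)$ for all $a\in A_{\mathbb{C}}$, $v\in V_1$. I would upgrade $T$ to an $A$-module isomorphism by a uniqueness argument. Transport the $k$-action from $V_1$ to $V_2$ by declaring
\[
\omega\star v := T\bigl(\omega\, T^{-1}(v)\bigr),\qquad \omega\in k,\ v\in V_2.
\]
This is a unital morphism of $\mathbb{C}$-algebras $k\to\mathrm{Hom}_{\mathbb{C}}(V_2,V_2)$, and because the original $k$-action on $V_1$ satisfies $(\omega a)v=\omega(av)=a(\omega v)$ and $T$ intertwines the $A_{\mathbb{C}}$-actions, the new action $\star$ also makes the $A_{\mathbb{C}}$-module $(V_2)_{\mathbb{C}}$ into an $A$-module. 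But $(V_2)_{\mathbb{C}}$ is strictly non-degenerate (by the first lemma of this section applied to the irreducible $A_{\mathbb{C}}$-module $(V_2)_{\mathbb{C}}$), so Lemma \ref{k-action} guarantees that only one such $k$-action exists. Therefore $\star$ coincides with the original $k$-action on $V_2$, which is exactly the statement that $T$ is $k$-linear. Hence $T$ is an isomorphism of $A$-modules and $V_1\sim V_2$ in $\Irr(A)$.

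\emph{Main obstacle.} There is no deep obstacle; the content is organizational. The delicate point is making sure, for surjectivity, that the purely $A$-stable submodules are the same as the $A_{\mathbb{C}}$-stable submodules once the $k$-action from Lemma \ref{k-action} is in place, and, for injectivity, that one genuinely invokes the uniqueness clause of Lemma \ref{k-action} rather than only its existence clause. Both are immediate once the correct compatibility diagrams are written down, so the corollary follows directly from the lemmas of this section.
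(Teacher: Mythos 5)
Your proof is correct. The surjectivity step is the one the paper intends (the paper simply writes ``Surjectivity follows from Lemmas 3.1 and 3.2'') and you make explicit the irreducibility check the paper glosses over: a subspace of $\widetilde W$ that is stable under both $A$ and $k$ is a fortiori $A_{\mathbb{C}}$-stable, so irreducibility of $W$ over $A_{\mathbb{C}}$ yields irreducibility of $\widetilde W$ over $A$, and $A\widetilde W = A_{\mathbb{C}}W = W \neq 0$.

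Your injectivity argument takes a genuinely different route. The paper proves $T(\omega v)=\omega T(v)$ by a direct computation: write $v=\sum a_i v_i$, use the module axiom $(\omega a_i)v_i=\omega(a_iv_i)$ on $V_1$ to get $T(\omega v)=T\bigl(\sum(\omega a_i)v_i\bigr)$, push $T$ through by $A_{\mathbb{C}}$-linearity, and regroup with the same axiom on $V_2$. You instead transport the $k$-action from $V_1$ to $V_2$ through $T$ and invoke the \emph{uniqueness} clause of Lemma \ref{k-action}, having noted that $(V_2)_{\mathbb{C}}$ is strictly non-degenerate (irreducible over $A_{\mathbb{C}}$ by Lemma 3.3, hence strictly non-degenerate by Lemma 3.1) and that the conjugated action still satisfies the compatibility axioms because $T$ is $A_{\mathbb{C}}$-linear. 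The computation the paper performs is, in effect, the elementary proof of the uniqueness you cite, so the two proofs have the same content; yours is more conceptual and explicitly names the role of uniqueness in Lemma \ref{k-action}, while the paper's is a self-contained calculation. Either formulation is fine.
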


\begin{proof} Surjectivity follows from  Lemmas 3.1 and 3.2.  For injectivity, let $V , W$ be two irreducible $A$-modules such that $V_{\mathbb{C}}$ and
$W_{\mathbb{C}}$ are equivalent  $A_{\mathbb{C}}$-modules. Let $T\colon V\rightarrow W$ be an isomorphism of $\mathbb{C}$ vector spaces with
\[
T(av)=aT(v)\quad \forall\,(a,v)\in A\times V
\] 
Given $v\in V$ and $\omega\in k$, choose $v_1,v_2,\ldots,v_r\in V$ and $a_1,a_2,\ldots,a_r\in A$ with 
\[
v=a_1v_1+a_2v_2+\cdots+a_rv_r
\]
Then
\begin{align*}
T(\omega v) & = T((\omega a_1)v_1+(\omega a_2)v_2+\cdots +(\omega a_r)v_r) \\
& = (\omega a_1)Tv_1+(\omega a_2)Tv_2+\cdots + (\omega a_r)Tv_r \\
& = \omega(a_1Tv_1+a_2Tv_2+\cdots + a_rTv_r)\\
 & = \omega (Tv).
\end{align*}

Hence $T\colon V\rightarrow W$ intertwines the $k$-actions on $V, W$ and thus $V, W$ are equivalent $A$-modules.
\end{proof}

\section{Finite type $k$-algebras}

\noindent An ideal $I$ in a $k$-algebra $A$ is \emph{primitive} if $I$ is the null-space of an irreducible representation of $A$, i.e. there is
an  irreducible representation  of $A$ \linebreak 
$\varphi\colon A\to \mathrm{Hom}_\CC(V,V)$ with
\[
I = \{a\in A\; |\; \varphi(a) = 0 \}
\]
Prim($A$) denotes the set of all primitive ideals in $A$. The evident map
\[
\mathrm{Irr}(A)\rightarrow \mathrm{Prim}(A)
\]
sends an irreducible representation to its null-space. On Prim($A$) there is the Jacobson topology. 
If $S$ is any subset of $\mathrm{Prim}(A)$, $S\subset \mathrm{Prim}(A)$, then the closure $\overline{S}$ of $S$ is :\\
\[
\overline{S}:=\left\{I\in \mathrm{Prim}(A)\mid I\supset \cap_{L\in S} L\right\} \vspace{3mm}
\]

\begin{definition}A $k$-algebra $A$ is of \emph{finite type} if, as a $k$-module, $A$ is finitely generated. 
\end{definition}

For any finite type $k$-algebra $A$, the following three statements are valid :
\begin{itemize}
\item  If $\varphi\colon A\to \mathrm{Hom}_\CC(V,V)$ is any irreducible representation of $A$, then $V$ is a finite dimensional $\mathbb{C}$ vector space and $\varphi\colon A\to \mathrm{Hom}_\CC(V,V)$  is surjective.
\item The evident map  $\mathrm{Irr}(A)\rightarrow \mathrm{Prim}(A)$ is a bijection.
\item Any primitive ideal in $A$ is a maximal ideal.

\end{itemize}
Since $\mathrm{Irr}(A)\rightarrow \mathrm{Prim}(A)$ is a bijection, the Jacobson topology on Prim($A$) can be transferred to Irr($A$) and thus Irr($A$) is topologized. Equivalently,
Irr($A$) is topologized by requiring that $\mathrm{Irr}(A)\rightarrow \mathrm{Prim}(A)$ be a homeomorphism.



For a finite type $k$-algebra $A$ ($k=\mathcal{O}(X))$, the central character is a  map
\[
\mathrm{Irr}(A)\longrightarrow X
\]
defined as follows. Let $\varphi$
\[
A\longrightarrow \mathrm{Hom}_\CC(V,V)
\]
\[
\k\longrightarrow \mathrm{Hom}_\CC(V,V)
 \] 
be an irreducible representation of $A$.
 $I_V$ denotes the identity operator of $V$.

For $\omega\in \k=\mathcal{O}(X)$, define $T_\omega\colon V\to V$
by
\[
T_\omega(v)=\omega v
\]
for all $v \in V$.
Then $T_{\omega}$ is an intertwining operator for  $A\rightarrow \mathrm{Hom}_\CC(V,V)$.  By Lemma 3.3 and Schur's Lemma, $T_{\omega}$ is a scalar multiple of $I_V$.
\[
T_{\omega} = \lambda_{\omega}I_V\hspace{8mm} \lambda_{\omega}\in\CC
\]
The map
\[
\omega\mapsto\lambda_{\omega}
\]
is a unital morphism of $\mathbb{C}$ algebras $\mathcal{O}(X)\rightarrow\mathbb{C}$ and thus is given by evaluation at a unique ($\mathbb{C}$ rational) point $p_{\varphi}$  of $X$:
\[
\lambda_{\omega} = \omega(p_{\varphi})\hspace{8mm}\forall\omega\in\mathcal{O}(X)
\]
The \emph{central character} $\mathrm{Irr}(A)\longrightarrow X$ is
\[
\varphi\mapsto p_{\varphi}\vspace{5mm}
\]
\textsc{Remark.} Corollary 3.4 states that Irr($A$) depends only on the underlying $\mathbb{C}$ algebra $A_{\mathbb{C}}$. The central character Irr($A$)$\rightarrow X$, however,
does depend on the structure of $A$ as a $k$-module. A change in the action of $k$ on $A_{\mathbb{C}}$ will change the central character.\vspace{3mm}\\
The central character $\mathrm{Irr}(A)\rightarrow X$ is continuous where Irr($A$) is topologized as above and $X$ has the Zariski topology. For a proof of this assertion
see \cite[Lemma 1, p.326]{KNS}.
From a somewhat heuristic non-commutative geometry point of view, $A_{\mathbb{C}}$ is a non-commutative complex affine variety, and a given action of $k$ on $A_{\mathbb{C}}$, making $A_{\mathbb{C}}$
into a finite type $k$-algebra $A$, determines a morphism of algebraic varieties $A_{\mathbb{C}} \rightarrow X$.
\section{Morita equivalence for $k$-algebras}
\begin{definition}
Let $B$ be a $\k$-algebra. A \emph{right $B$-module} is a $\CC$-vector space $V$ with given morphisms of $\CC$-algebras
\[
B^{\op}\longrightarrow\mathrm{Hom}_\CC(V,V)
\]
\[
\k\longrightarrow\mathrm{Hom}_\CC(V,V)
\]
such that:
\begin{enumerate}
\item $\k\rightarrow\mathrm{Hom}_\CC(V,V)$ is unital
\end{enumerate} 
\begin{enumerate}
\setcounter{enumi}{1}
\item $v(\omega b)=(v\omega)b=(vb)\omega$\quad $\forall (v,\omega,b)\in V\times\k\times B$.
\end{enumerate}
$B^{\op}$ is the opposite algebra of $B$. $V$ is non-degenerate  if $VB=V$.
\end{definition}

Note that a right $B$-module is the same as a left $B^{\op}$-module.

 With $k$ fixed, let $A$, $B$ be two $\k$-algebras. An $A-B$ bimodule, denoted ${}_A V_B$, is a $\mathbb{C}$ vector space $V$ such that :
\begin{enumerate}
\item $V$ is a left $A$-module.
\item $V$ is a right $B$-module.
\item $a(vb)=(av)b\qquad \forall \, (a,v,b)\in A\times V\times B$.
\item $\omega v=v\omega\qquad \forall \, (\omega,v)\in \k\times V$.\vspace{3mm}
\end{enumerate}
An $A-B$ bimodule ${}_AV_B$ is non-degenerate if $AV=V=VB$. $I_V$ is the identity map of $V$. $A$ is an $A-A$ bimodule in the evident way.
\noindent 
\begin{definition}A $k$-algebra 
$A$ has \emph{local units} if given any finite set $\{a_1,a_2,\ldots,a_r\}$ of elements of $A$, there is an idempotent $Q\in A$ ($Q^2=Q$) with
\[
Qa_j=a_jQ=a_j\qquad j=1,2,\ldots,r.\vspace{3mm}
\]
\end{definition}

\begin{definition}Let $A$, $B$ be  two $\k$-algebras with local units.
A \emph{Morita equivalence} (between $A$ and $B$) is given by a pair of non-degenerate bimodules
\[
{}_AV_B\quad {}_BW_A
\]
together with isomorphisms of bimodules
\begin{align*}
\alpha\colon V\otimes_B W&\to A\\
\beta\colon W\otimes _A V &\to B
\end{align*}
such that there is commutativity in the diagrams:
\end{definition}
\[
\xymatrix{
V\otimes_B W\otimes _A V \ar[rr]^<<<<<<<<<<{I_V\otimes \beta} \ar[d]^{\alpha\otimes I_V}&& V \otimes_B B \ar[d]^{\cong}\\
A\otimes_A V\ar[rr]_{\cong} && V
}
\]

\[
\xymatrix{
W\otimes_A V\otimes _B W \ar[rr]^<<<<<<<<<<{I_W\otimes \alpha} \ar[d]^{\beta\otimes I_W}&& W \otimes_A A \ar[d]^{\cong}\\
B\otimes_B V\ar[rr]_{\cong} && W
}\vspace{7mm}
\]

  Let $A$, $B$ two $\k$-algebras with local units, and suppose given a Morita equivalence
\[
{}_AV_B\hspace{7mm} {}_BW_A\hspace{7mm}\alpha\colon V\otimes_B W\rightarrow A\hspace{7mm}\beta\colon W\otimes _A V\rightarrow B\vspace{2mm}   
\]  
  
The \emph{linking algebra} is 

\[
L({}_AV_B,{}_BW_A): = \left(
\begin{array}{cc}
A & V\\
W & B
\end{array}
\right)
\]

i.e.\;$L({}_AV_B,{}_BW_A)$ consists of all $2\times 2$ matrices having (1, 1) entry in $A$,\\ (2, 2) entry in $B$, (2, 1) entry in $W$, and (1, 2) entry in $V$.
Addition and multiplication are matrix addition and matrix multiplication.  Note that $\alpha$ and $\beta$ are used in the matrix multiplication.\vspace{2mm}\\
$L({}_AV_B,{}_BW_A)$ is a $k$-algebra. With $\omega \in k$, the action of $k$ on $L({}_AV_B,{}_BW_A)$ is given by

\[
   \omega \left(
\begin{array}{cc}
a & v\\
w & b
\end{array}
\right) =  \left(
\begin{array}{cc}
\omega a & \omega v\\
\omega w & \omega b
\end{array}
\right)
\]

\bigskip 

A Morita equivalence between $A$ and $B$ 
determines an equivalence of categories between the category of non-degenerate left $A$-modules  and the category of non-degenerate left $B$-modules. Similarly
for right modules. Also, a Morita equivalence determines  isomorphisms (between $A$ and $B$) of Hochschild homology, cyclic homology, and periodic cyclic homology.

\textsc{Example.} For $n$ a positive integer, let $\Mat_n(A)$ be  the $k$-algebra of all $n\times n$ matrices with entries in $A$.  If $A$ has local units, $A$ and $\Mat_n(A)$ are Morita equivalent as follows.  For $m, n$ positive integers, denote by $\Mat_{m, n}(A)$
the set of all $m\times n$  (i.e. $m$ rows and $n$ columns) matrices with entries in $A$. Matrix multiplication then  gives a map 
\[
\Mat_{m, n}(A)\times \Mat_{n, r}(A)\longrightarrow \Mat_{m, r}(A) \vspace{2mm}
\]
With this notation, $\Mat_{n, n}(A) = \Mat_{n}(A)$ and $\Mat_{1, 1}(A) = \Mat_{1}(A) = A.$ Hence matrix multiplication gives maps\vspace{.1mm}\\  
\[
\Mat_{1,n}(A)\times \Mat_n(A)\longrightarrow \Mat_{1, n}(A)\hspace{10mm} \Mat_n(A)\times \Mat_{n, 1}\longrightarrow \Mat_{n, 1}(A)\vspace{2mm}
\]
Thus $\Mat_{1, n}(A)$ is a right  $\Mat_n(A)$-module and $\Mat_{n, 1}(A)$ is a left $\Mat_n(A)$-module.\\
Similarly, $\Mat_{1, n}(A)$ is a left  $A$-module and $\Mat_{n, 1}(A)$ is a right $A$-module.\\
With $A=A$ and $B=\Mat_n(A)$, the bimodules of the Morita equivalence  are $V= \Mat_{1, n}(A)$ and $W = \Mat_{n, 1}(A)$.\\
Note that the required isomorphisms of bimodules
\begin{align*}
\alpha\colon V\otimes_B W&\to A\\
\beta\colon W\otimes _A V &\to B
\end{align*}
are obtained by observing that the matrix multiplication maps\vspace{1mm}
\begin{align*}
\Mat_{1, n}(A) \times \Mat_{n, 1}(A)&\to A\\
\Mat_{n, 1}(A)\times \Mat_{1, n}(A) &\to \Mat_n(A)
\end{align*}
factor through the quotients  $\Mat_{1, n}(A) \otimes_{\Mat_n(A)} \Mat_{n, 1}(A),\quad \Mat_{n, 1}(A) \otimes_A \Mat_{1, n}(A)$ \\
and so give bimodule isomorphisms
\begin{align*}
\alpha\colon \Mat_{1, n}(A) \otimes_{\Mat_n(A)} \Mat_{n, 1}(A)&\to A\\
\beta\colon \Mat_{n, 1}(A) \otimes_A \Mat_{1, n}(A)&\to \Mat_n(A)
\end{align*}

\noindent If $A$ has local units, then $\alpha$ and $\beta$ are isomorphisms. Therefore $A$ and $\Mat_n(A)$ are Morita equivalent.\vspace{2mm}\\
If $A$ does not have local units, then $\alpha$ and  $\beta$ can fail to be isomorphisms, and there is no way to prove that $A$ and $\Mat_n(A)$ 
 are Morita equivalent. In examples, this already happens with $n=1$, and there is then no way to prove (when $A$ does not have local units) that
 $A$ is Morita equivalent to $A$.  For more details on this issue see  below where the proof is given that in the new equivalence relation $A$
 and  $\Mat_n(A)$  are equivalent even when $A$ does not have local units.\vspace{2mm}\\
 A finite type $k$-algebra $A$ has local units iff $A$ is unital.
 
 \section{The exceptional set $\fE_A(X)$}   A Morita equivalence between two finite type $k$-algebras $A, B$ preserves the central character i.e. there is commutativity in the diagram
\[
\begin{CD}
\mathrm{Irr}(A) @>>> \mathrm{Irr}(B) \\
@V cc VV         @VV cc V \\
X @>>I_X> X
\end{CD}
\]
where the upper horizontal arrow is the bijection determined by the given Morita equivalence, the two vertical arrows are the two central characters, and $I_X$ is the identity map of $X$.

By the \emph{exceptional set} $\fE(A)$ we shall mean the set of all $x \in X$ such that the fibre over $x$ has cardinality greater than $1$:
\[
\fE(A): = \{x \in X : \sharp cc^{-1}(x) >1\}.
\]   

If $A$ and $B$ are Morita equivalent as $k$-algebras, then we will have $\fE(A) = \fE(B)$.  Contrapositively, we have the following useful
\begin{corollary}\label{exceptional}  If $\fE (A) \neq \fE(B)$ then A and B are  not  Morita  equivalent as $k$-algebras.
\end{corollary}

 \section{Spectrum preserving morphisms}
 
\noindent Let $A$, $B$ two finite type $\k$-algebras,  and let $f\colon A\to B$ be a morphism of $\k$-algebras.
\begin{definition}
$f$ is \emph{spectrum preserving} if
\begin{enumerate}
\item Given any primitive ideal $J\subset B$, there is a unique primitive ideal $I\subset A$ with $I\supset f^{-1}(J)$
\end{enumerate} 
and
\begin{enumerate}
\setcounter{enumi}{1}
\item The resulting map 
\[
\mathrm{Prim}(B)\to \mathrm{Prim}(A)
\]
is a bijection.
\end{enumerate}
\end{definition}

For $I$ and $J$ as above, let $V$ be the simple $B$-module with annihilator $J$.   Then the simple $A$-module
with annihilator $I$ can be realized as a quotient of $V$ and also as a subspace of $V$.   

\begin{example} Let $A$, $B$ two unital finite type $\k$-algebras, and suppose given a Morita equivalence
\[
{}_AV_B\hspace{7mm} {}_BW_A\hspace{7mm}\alpha\colon V\otimes_B W\rightarrow A\hspace{7mm}\beta\colon W\otimes _A V\rightarrow B\vspace{2mm}   
\]    
With the linking algebra $L({}_AV_B,{}_BW_A)$ as above, the inclusions

\[
A\hookrightarrow L({}_AV_B,{}_VW_A)\hookleftarrow B
\]

\[
a \mapsto \left(
\begin{array}{cc}
a & 0\\
0 & 0
\end{array}
\right)
\quad \quad
 \left(
\begin{array}{cc}
0 & 0\\
0 & b
\end{array}
\right) \leftmapsto b
\]
are spectrum preserving morphisms of finite type $\k$-algebras.
The bijection
\[
\mathrm{Prim}(B)\longleftrightarrow \mathrm{Prim}(A)
\]
so obtained is the bijection determined by the given Morita equivalence.
\end{example}

\noindent\underline{Remark.} If  $f\colon A\to B$ is a spectrum preserving morphism of finite type $\k$-algebras, then 
the resulting bijection
\[
\mathrm{Prim}(B)\longleftrightarrow \mathrm{Prim}(A)
\]
is a homeomorphism. For a proof of this assertion see \cite[Theorem 3, p.342]{BN}. 
Consequently, if 
$A$, $B$ are two unital finite type $\k$-algebras, and 
\[
{}_AV_B\hspace{7mm} {}_BW_A\hspace{7mm}\alpha\colon V\otimes_B W\rightarrow A\hspace{7mm}\beta\colon W\otimes _A V\rightarrow B\vspace{2mm}   
\]    
is a Morita equivalence, then the resulting bijection
\[
\mathrm{Prim}(B)\longleftrightarrow \mathrm{Prim}(A)
\]
is a homeomorphism.\\

\begin{definition}
An ideal $I$ in a $k$-algebra $A$ is a \emph{$\k$-ideal} if $\omega a\in I$ $\forall\,(\omega,a)\in \k\times I$.
\end{definition}
\noindent\underline{Remark.} Any primitive ideal in a $k$-algebra $A$ is a $k$-ideal.\\

\noindent Given $A, B$ two finite type $\k$-algebras, let $f\colon A\to B$ be a morphism of $\k$-algebras.
\begin{definition}
$f$ is \emph{spectrum preserving with respect to filtrations} if there are $\k$-ideals  
\[
0=I_0\subset I_1\subset\cdots\subset I_{r-1}\subset I_r=A\qquad \text{in $A$} 
\]
and $\k$ ideals
\[
0=J_0\subset J_1\subset\cdots\subset J_{r-1}\subset J_r=B\qquad \text{in $B$}
\]
with $f(I_j)\subset J_j$, ($j=1,2,\ldots,r$) and $I_j/I_{j-1}\to J_j/J_{j-1}$, ($j=1,2,\ldots,r$) is spectrum preserving.
\end{definition}

The primitive ideal spaces of the subquotients $I_j/I_{j-1}$ and $ J_j/J_{j-1}$ are the strata for stratifications of $\Prim(A)$ and $\Prim(B)$.
Each stratum of $\Prim(A)$ is mapped homeomorphically onto the corresponding stratum of $\Prim(B)$.   However, the map 
\[
\Prim(A) \to \Prim(B)
\]
might not be a homeomorphism.

\section{Algebraic Variation of $\k$-structure}

\noindent Let $A$ be a unital $\mathbb{C}$-algebra, and let 
\[
\Psi\colon \k\to Z\left(A[t,t^{-1}]\right)
\]
be a unital morphism of $\mathbb{C}$-algebras.  Here $t$ is an indeterminate,so $A[t, t^{-1}]$ is the algebra of Laurent polynomials with coefficients in $A$. As above Z denotes  ``center". For $\zeta\in \mathbb{C}^{\times}=\mathbb{C}-\{0\}$, $ev(\zeta)$ denotes the ``evaluation at $\zeta$'' map:
\begin{align*}
ev(\zeta)\colon A[t,t^{-1}]&\to A\\
\sum a_jt^j                         &\mapsto \sum a_j\zeta^j 
\end{align*}

\noindent Consider the composition
\[
\k\overset{\Psi}{\longrightarrow} Z\left(A[t,t^{-1}]\right)\overset{ev(\zeta)}{\longrightarrow} Z(A).
\]
Denote the unital $\k$-algebra so obtained by $A_\zeta$. $\forall\zeta\in\CC^\times=\CC-\{0\}$, the underlying $\CC$-algebra of $A_\zeta$ is $A$.\vspace{2mm}\\
\[
(A_{\zeta})_{\mathbb{C}} = A\hspace{20mm}\forall\zeta\in\mathbb{C}^{\times}\vspace{2mm}
\]
Such a family $\{A_{\zeta}\}$, $\zeta\in\mathbb{C}^{\times}$, of unital $k$-algebras, will be referred to as \emph{an algebraic variation of k-structure with parameter
space }$\mathbb{C}^{\times}.$ 

\section{Stratified Equivalence}
\noindent With $k$ fixed, consider the collection of all finite type $k$-algebras. On this collection, \emph{stratified equivalence} is, by definition, the equivalence relation generated
by the two elementary steps :\vspace{"3mm}\\
\underline{Elementary Step 1.} If there is a morphism of $k$-algebras $f: A\rightarrow B$ which is spectrum preserving with respect to filtrations, then $A\sim B.$\vspace{3mm}\\
\underline{Elementary Step 2.} If there is $\{A_{\zeta}\}$, $\zeta\in\mathbb{C}^{\times}$,  an algebraic variation of $k$-structure with parameter
space $\mathbb{C}^{\times}$,  such that each $A_{\zeta}$  is a unital finite type $k$-algebra, then for any $\zeta, \eta\in\mathbb{C}^{\times}, A_{\zeta}\sim A_{\eta}.$\vspace{3mm}\\
Thus, two finite type $k$-algebras $A, B$ are equivalent if and only if there is a finite sequence $A_0, A_1, A_2,\ldots,A_r$ of finite type $k$-algebras  with $A_0 =A, A_r=B$, and for each $j = 0, 1, \ldots, r-1$ one of the following three possibilities is valid :\\
\begin{itemize}
\item a morphism of $k$-algebras $A_j\rightarrow A_{j+1}$ is given which is spectrum preserving with respect to filtrations.\\
\item a morphism of $k$-algebras $A_j\leftarrow A_{j+1}$ is given which is spectrum preserving with respect to filtrations.\\
\item $\{A_{\zeta}\}$, $\zeta\in\mathbb{C}^{\times}$,  an algebraic variation of $k$-structure with parameter
space $\mathbb{C}^{\times}$, is given such that each $A_{\zeta}$  is a unital finite type $k$-algebra, and $\eta, \tau\in\mathbb{C}^{\times}$
have been chosen with $A_j=A_{\eta}, A_{j+1}=A_{\tau}$.\vspace{3mm}
\end{itemize}
To give a stratified  equivalence relating $A$ and $B$, the finite sequence of elementary 
steps (including the filtrations)
must be given. Once this has been done, a bijection of the primitive ideal spaces and 
an isomorphism of periodic cyclic homology \cite{BN} are determined:
\[
\mathrm{Prim}(A)\longleftrightarrow \mathrm{Prim}(B)\quad\quad\quad \HP_*(A)\cong \HP_*(B)
\]
\begin{proposition}\label{Morita}
If two unital finite type $k$-algebras $A, B$ are Morita equivalent 
(as $k$-algebras) then they are stratified  equivalent.
\[
 A\underset{\text{\tiny Morita}}{\sim}B\Longrightarrow A\sim B
\]
\end{proposition}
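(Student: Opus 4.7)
The plan is to bridge Morita equivalence and stratified equivalence via the linking algebra, reducing the proposition to two applications of Elementary Step 1. Given the Morita data ${}_AV_B$, ${}_BW_A$, $\alpha$, $\beta$, form the linking algebra
\[
L := L({}_AV_B, {}_BW_A) = \begin{pmatrix} A & V \\ W & B \end{pmatrix}
\]
as in Section 5. I would then verify three things: that $L$ is itself a unital finite type $k$-algebra, that the corner inclusions $\iota_A: A \hookrightarrow L$ and $\iota_B: B \hookrightarrow L$ are morphisms of $k$-algebras, and that they are spectrum preserving with respect to filtrations. Combining these, Elementary Step 1 yields $A \sim L$ and $B \sim L$, hence $A \sim B$.

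Unitality of $L$ follows from unitality of $A$ and $B$: $1_L = \bigl(\begin{smallmatrix} 1_A & 0 \\ 0 & 1_B \end{smallmatrix}\bigr)$. That the corner maps are $k$-algebra morphisms is immediate from the entry-wise $k$-action on $L$ defined in Section 5. That the inclusions are spectrum preserving morphisms is precisely the content of Example 6.2. The one point that genuinely needs checking is that $L$ is of \emph{finite type} as a $k$-module. Since $\alpha: V \otimes_B W \to A$ is surjective and $A$ is unital, there exist finitely many $v_1, \dots, v_n \in V$ and $w_1, \dots, w_n \in W$ with $\sum_i \alpha(v_i \otimes w_i) = 1_A$; using $v = 1_A \cdot v = \sum_i v_i \cdot \beta(w_i \otimes v)$ shows that $V$ is finitely generated as a right $B$-module. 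Symmetrically, $W$ is finitely generated as a right $A$-module. Because $A$ and $B$ are themselves finite type over $k$, both $V$ and $W$ are finite type as $k$-modules, and therefore so is $L$.

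The final observation needed is that any spectrum preserving morphism $f: A' \to B'$ of finite type $k$-algebras is automatically spectrum preserving with respect to filtrations: take the trivial filtrations $0 = I_0 \subset I_1 = A'$ and $0 = J_0 \subset J_1 = B'$, so that $r = 1$ and $I_1/I_0 \to J_1/J_0$ is just $f$ itself. Applying this to $\iota_A$ and $\iota_B$ and invoking Elementary Step 1 twice concludes the proof.

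The main obstacle is the finite generation of $V$ and $W$ over $k$, since without this, $L$ would not be a finite type $k$-algebra and the machinery of Section 7 would not apply. This is exactly where the unital hypothesis on $A$ and $B$ is used in an essential way: the existence of units lets one express $1_A$ and $1_B$ through the isomorphisms $\alpha, \beta$ as \emph{finite} sums of elementary tensors, which in turn forces $V$ and $W$ to be finitely generated. In the non-unital local-units setting, one would need a separate argument (and indeed, as the text notes after the matrix example, the story for non-unital algebras is more delicate).
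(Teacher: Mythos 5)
Your proof takes essentially the same route as the paper: form the linking algebra, observe (via Example 6.2) that the corner inclusions $A \hookrightarrow L \hookleftarrow B$ are spectrum preserving morphisms of finite type $k$-algebras, and apply Elementary Step 1 twice. The only difference is that you supply two checks the paper leaves implicit — namely that $L$ is in fact a finite type $k$-module (your argument that unitality plus surjectivity of $\alpha$ forces $V$ to be finitely generated as a right $B$-module is correct and exactly where the unital hypothesis is used), and that a spectrum preserving morphism is automatically spectrum preserving with respect to filtrations by taking the trivial filtration — both of which are welcome additions but do not change the underlying approach.
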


\begin{proof} Let $A$, $B$ two unital finite type $\k$-algebras, and suppose given a Morita equivalence
\[
{}_AV_B\hspace{7mm} {}_BW_A\hspace{7mm}\alpha\colon V\otimes_B W\rightarrow A\hspace{7mm}\beta\colon W\otimes _A V\rightarrow B\vspace{2mm}   
\]    
The linking algebra is
\[
L({}_AV_B,{}_BW_A): = \left (
\begin{array}{cc}
A & V\\
W & B
\end{array}
\right )
\]

\noindent The inclusions
\[
A\hookrightarrow L({}_AV_B,{}_VW_A)\hookleftarrow B
\]

\[
a \mapsto \left(
\begin{array}{cc}
a & 0\\
0 & 0
\end{array}
\right)
\quad \quad
 \left(
\begin{array}{cc}
0 & 0\\
0 & b
\end{array}
\right) \leftmapsto b
\]
are spectrum preserving morphisms of finite type $\k$-algebras. 
Hence $A$ and $B$ are stratified  equivalent.
\end{proof}

\noindent According to the above, a Morita equivalence of $A$ and $B$ gives a homeomorphism
\[
 \mathrm{Prim}(A) \simeq \mathrm{Prim}(B) 
 \]
 However, the bijection 
\[
\mathrm{Prim}(A)\longleftrightarrow \mathrm{Prim}(B)
\]
obtained from a stratified equivalence might not be a homeomorphism, as in the following example.

\begin{example}\label{XY}
   Let $Y$ be   a sub-variety of $X$.    
We will write $\mathcal{I}_Y$ for the ideal in $\mathcal{O}(X)$ determined by $Y$, so that 
\[
\mathcal{I}_Y=\left\{\omega\in\mathcal{O}(X)\mid \omega(p)=0\,\,\,\,\,\forall\,p\in Y\right\}.
\]
Let $A$ be the algebra of all $2\times 2$ matrices whose diagonal entries  are in $\mathcal{O}(X)$ and whose
off-diagonal entries are in $\mathcal{I}_Y$. Addition and multiplication in $A$ are matrix addition and matrix
multiplication. As a $k$-module, $A$ is the direct sum of $\mathcal{O}(X)\oplus\mathcal{O}(X)$ with $\mathcal{I}_Y\oplus\mathcal{I}_Y$.

\[
A = \left (
\begin{array}{cc}
\cO(X) & \cI_Y\\
\cI_Y & \cO(X)
\end{array}
\right )
\]

 Set $B=\mathcal{O}(X)\oplus \mathcal{O}(Y)$, so that  $B$ is the coordinate algebra of the disjoint union $X\sqcup Y$. We have
$\mathcal{O}(Y)=\mathcal{O}(X)/\mathcal{I}_Y.$
As a $k=\mathcal{O}(X)$-module, $B$ is the direct sum $\mathcal{O}(X)\oplus (\mathcal{O}(X)/\mathcal{I}_Y)$.
\end{example}

\begin{theorem} The algebras $A$ and $B$  are stratified  equivalent but not Morita equivalent: 
\[
A\sim B\qquad A\underset{\text{\tiny Morita}}{\nsim}B
\]
\end{theorem}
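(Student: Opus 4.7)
The plan is to prove the two halves of the theorem independently, using the matching two-step filtrations $0\subset M_2(\cI_Y)\subset A$ and $0\subset \cI_Y\oplus 0\subset B$; on both sides the bottom subquotient has primitive ideal space $X\setminus Y$, and the top subquotient is $\cO(Y)\oplus \cO(Y)$ with primitive ideal space $Y\sqcup Y$, giving set-level bijections $\Prim(A)\cong \Prim(B)\cong X\sqcup Y$.

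For the non-Morita claim, I compute both primitive ideal spaces with their Jacobson topology and show they are not homeomorphic. At $p\in X\setminus Y$ one has $A/m_p A\cong M_2(\CC)$, giving a single primitive ideal; at $p\in Y$ the off-diagonal entries in $A/m_p A$ lie in $\cI_Y/m_p\cI_Y$ and are nilpotent, so the semisimple quotient is $\CC\oplus\CC$, giving two primitive ideals. The topologies differ sharply: in $\Prim(B)$ the two components are clopen, separated by the central idempotent $(0,1)\in B$, whereas in $\Prim(A)$ the subset $X\setminus Y$ is dense because the intersection of the primitive ideals over $X\setminus Y$ is zero (a polynomial on $X$ vanishing on the open dense set $X\setminus Y$ is identically zero), so both copies of $Y$ in $\Prim(A)$ lie in the closure of $X\setminus Y$. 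By the homeomorphism assertion in the remark after Example 5.2, this topological distinction rules out a Morita equivalence.

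For the stratified equivalence, the plan is to chain spectrum-preserving morphisms through an intermediate algebra. The top subquotients of the two filtrations already coincide; the bottom subquotients $M_2(\cI_Y)$ and $\cI_Y$ are stratified equivalent via the paper's argument that $\Mat_n(I)\sim I$ for non-unital $I$ (instantiated by the standard matrix bimodules, which supply spectrum-preserving morphisms). To assemble these local equivalences into a global one $A\sim B$, I use the non-full idempotent $e=e_{11}\in A$: a direct computation gives $eAe\cong \cO(X)$, $AeA=\bigl(\begin{smallmatrix}\cO(X)&\cI_Y\\ \cI_Y&\cI_Y^2\end{smallmatrix}\bigr)$, and $A/AeA\cong \cO(X)/\cI_Y^2$. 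A Peirce-type argument is then expected to yield a stratified equivalence $A\sim \cO(X)\oplus \cO(X)/\cI_Y^2$. The final step is the surjection $\cO(X)/\cI_Y^2\twoheadrightarrow \cO(Y)$, whose kernel $\cI_Y/\cI_Y^2$ is square-zero and hence has empty primitive spectrum; this is spectrum preserving with respect to the filtration $0\subset \cI_Y/\cI_Y^2\subset \cO(X)/\cI_Y^2$, and delivers $B=\cO(X)\oplus \cO(Y)$.

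The main obstacle is the Peirce step $A\sim \cO(X)\oplus \cO(X)/\cI_Y^2$: the natural decomposition $a\mapsto(eae,\,a\bmod AeA)$ is only a $k$-module map, failing to be an algebra homomorphism by the defect $e\,a_1(1-e)a_2\,e\in \cI_Y^2\cdot e$. I plan to handle this by chaining through an auxiliary pullback algebra, for instance $A\times_{\cO(Y)\oplus \cO(Y)} B$, equipped with a refined three-step filtration whose middle stratum absorbs the defect and whose projections to $A$ and $B$ are spectrum preserving with respect to the refined filtrations; composing the resulting chain with the nilpotent-quotient step above yields $A\sim B$.
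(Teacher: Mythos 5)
Your argument for the non-Morita half is sound and somewhat more detailed than the paper's (the paper simply observes that the two Jacobson topologies differ; you make this explicit via density of $X\setminus Y$ in $\Prim(A)$ versus the clopen decomposition of $\Prim(B)$). The stratified-equivalence half, however, has a genuine gap. Noting that the subquotients of $0\subset M_2(\cI_Y)\subset A$ and $0\subset\cI_Y\oplus 0\subset B$ have matching spectra does not by itself produce a stratified equivalence: the definition requires an actual chain of $k$-algebra morphisms, each spectrum preserving with respect to explicit filtrations. You recognize this and try to build such morphisms from the Peirce decomposition at $e=e_{11}$, but you then hit the obstruction that $a\mapsto(eae,\,a\bmod AeA)$ is not multiplicative, and the proposed repair --- a pullback $A\times_{\cO(Y)\oplus\cO(Y)}B$ with a ``refined three-step filtration whose middle stratum absorbs the defect'' --- is left entirely unverified. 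As written, there is no demonstrated morphism, no verified filtration, and no check that the subquotient maps are spectrum preserving; the plan is a sketch of an intention, not a proof.

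The paper resolves this cleanly by introducing the intermediate algebra $\Mat_2(\cO(X))\oplus\cO(Y)$ and two explicit $k$-algebra morphisms $A\to \Mat_2(\cO(X))\oplus\cO(Y)$, $T\mapsto (T,\,t_{22}|_Y)$, and $B=\cO(X)\oplus\cO(Y)\to\Mat_2(\cO(X))\oplus\cO(Y)$, $(\omega,\theta)\mapsto\bigl(\left(\begin{smallmatrix}\omega&0\\0&0\end{smallmatrix}\right),\theta\bigr)$. The crucial observation making the first map a homomorphism is exactly the point where your Peirce map fails: $(TT')_{22}=t_{21}t'_{12}+t_{22}t'_{22}$, and the cross term lies in $\cI_Y^2\subset\cI_Y$, hence dies on restriction to $Y$. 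The filtration used on $A$ is not yours but $\{0\}\subset\left(\begin{smallmatrix}\cO(X)&\cI_Y\\\cI_Y&\cI_Y\end{smallmatrix}\right)\subset A$, matched against $\{0\}\subset\Mat_2(\cO(X))\oplus\{0\}\subset\Mat_2(\cO(X))\oplus\cO(Y)$; with these the rightward map is spectrum preserving with respect to filtrations, the leftward map (a corner inclusion) is spectrum preserving outright, and $A\sim B$ follows in two elementary steps. If you want to salvage your route, the defect term you identified is precisely what the paper's second coordinate $t_{22}|_Y$ is designed to kill, so you may find it more economical to abandon the pullback construction and adopt this intermediate algebra directly.
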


\begin{proof} Let $\Mat_2(\mathcal{O}(X)$ denote the algebra of all $2\times 2$ matrices with entries in $\mathcal{O}(X)$. Consider
the algebra morphisms 

\[
A \longrightarrow \Mat_2(\cO(X) \oplus \cO(Y)) \longleftarrow \cO(X) \oplus \cO(Y)
\]

\[
T \mapsto (T, t_{22} | Y) \quad \quad \quad (T_{\omega}, \theta) \leftmapsto (\omega, \theta)
\]
where
\[
T = \left( 
\begin{array}{cc}
t_{11} & t_{12}\\
t_{21} & t_{22}
\end{array}
\right ) \qquad T_{\omega} = \left(\begin{array}{cc}
\omega & 0\\
0 & 0
\end{array}
\right )
\]

The filtration of $A$ is given by
\[
\{0\} \subset \left( \begin{array}{cc}
\cO(X) & \cI_Y\\
\cI_Y & \cI_Y
\end{array}
\right )
\subset A
\]
and the filtration of $\Mat_2(\cO(X)) \oplus \cO(Y)$ is given by

\[
\{0\} \subset \Mat_2(\cO(X) \oplus \{0\}) \subset \Mat_2(\cO(X)) \oplus \cO(Y).
\]

The rightward pointing arrow is spectrum preserving with respect to the indicated filtrations. The leftward pointing arrow is spectrum preserving (no filtrations needed).
We infer that
\[
A\sim B.
\]

Note that 
\begin{align*}
\Prim(A) & =
\text{\scalebox{.9}{$X$ with each point of $Y$ replaced by two points}}
\end{align*}
and 
\begin{align*}
\Prim(B) & =\Prim\big(\OOO(X)\oplus\OOO(Y)\big)\\
& =X\sqcup Y
\end{align*}

The spaces $\Prim(A)$ and $\Prim(B)$ are not homeomorphic, and therefore $A$ is not Morita equivalent to $B$.
\end{proof}

Unlike Morita equivalence, stratified equivalence works well for finite
 type $k$-algebras  whether or not the algebras are unital, e.g. $A$ and $\Mat_n(A)$ are stratified  equivalent even when $A$ is not unital. 
 See Proposition 8.5
 below.
 
 For any $k$-algebra $A$  there is the evident isomorphism of $k$-algebras $\Mat_n(A)\cong A\otimes_{\mathbb{C}}\Mat_n(\mathbb{C})$. Hence, using this
 isomorphism, if $W$ is a representation of  $A$ and $U$ is a representation of $\Mat_n(\mathbb{C})$, then $W\otimes_{\mathbb{C}}U$ is a representation of $\Mat_n(A)$.
 
  As  above, $\Mat_{n, 1}(\mathbb{C})$ denotes the $n\times 1$ 
 matrices with entries in $\mathbb{C}$. Matrix multiplication gives the usual action of $\Mat_n(\mathbb{C})$ on $\Mat_{n, 1}(\mathbb{C})$.
 \[
 \Mat_n(\mathbb{C})\times \Mat_{n, 1}(\mathbb{C})\longrightarrow \Mat_{n, 1}(\mathbb{C})
 \]
 This is the unique irreducible representation of $\Mat_n(\mathbb{C}).$ For any $k$-algebra $A$, if $W$ is a representation of $A$, then $W\otimes_{\mathbb{C}}\Mat_{n, 1}(\mathbb{C})$
 is a representation of $\Mat_n(A)$.
 \begin{lemma}  Let $A$ be a finite type $k$-algebra and let $n$ be a positive integer. Then:\\
 (i) If $W$ is an irreducible representation of $A$, $W\otimes_{\mathbb{C}} \Mat_{n, 1}(\mathbb{C})$ is an irreducible representation of $\Mat_n(A).$\\
 (ii) The resulting map $\Irr(A)\rightarrow \Irr(\Mat_n(A))$ is a bijection.   
 \end{lemma}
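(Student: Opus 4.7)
For part (i), my plan is to exploit two facts about finite type $k$-algebras recalled in Section 4: an irreducible representation $W$ is finite-dimensional over $\CC$, and the representation map $\varphi\colon A\to\Hom_\CC(W,W)$ is surjective. Applying $\Mat_n$ entrywise yields a surjective $k$-algebra homomorphism
\[
\Mat_n(\varphi)\colon \Mat_n(A)\longrightarrow \Mat_n(\Hom_\CC(W,W))=\Hom_\CC(W\otimes_\CC \Mat_{n,1}(\CC),\, W\otimes_\CC \Mat_{n,1}(\CC)).
\]
The column module $W\otimes_\CC \Mat_{n,1}(\CC)$ is the unique (up to isomorphism) simple module of the full matrix algebra on the right, so the surjectivity forces it to be simple as an $\Mat_n(A)$-module; the $k$-action is compatible because $k$ acts diagonally on matrix entries, and Lemma \ref{k-action} takes care of any remaining bookkeeping.

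For the injectivity half of (ii), the same computation shows that the kernel of $\Mat_n(A)$ acting on $W\otimes_\CC \Mat_{n,1}(\CC)$ is precisely $\Mat_n(\ker\varphi)$. Hence if $W\otimes_\CC \Mat_{n,1}(\CC)\cong W'\otimes_\CC \Mat_{n,1}(\CC)$ as $\Mat_n(A)$-modules, then $\ker\varphi_W=\ker\varphi_{W'}$ as primitive ideals of $A$; since $A/\ker\varphi_W$ is a full matrix algebra over $\CC$ with a unique simple module up to isomorphism, we conclude $W\cong W'$.

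For surjectivity, given $V\in\Irr(\Mat_n(A))$ with primitive (hence maximal) kernel $J$, the main ingredient is the classical correspondence between two-sided ideals of $A$ and those of $\Mat_n(A)$, sending $I$ to $\Mat_n(I)$. Writing $J=\Mat_n(I)$, the quotient $\Mat_n(A)/J\cong \Mat_n(A/I)$ is identified with the full matrix algebra $\Hom_\CC(V,V)$, which forces $A/I$ to be a full matrix algebra over $\CC$. Thus $I$ is primitive, and writing $W$ for the corresponding irreducible $A$-module, part (i) makes $W\otimes_\CC \Mat_{n,1}(\CC)$ an irreducible $\Mat_n(A)$-module with kernel $J$, hence isomorphic to $V$ as the unique simple $\Mat_n(A)/J$-module.

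The main obstacle is verifying the ideal correspondence $I\leftrightarrow \Mat_n(I)$ without assuming $A$ unital. When $A$ has a unit (and hence matrix units $E_{ij}$ sit inside $\Mat_n(A)$) the argument is standard; in the genuinely non-unital case one either checks by hand that the set of matrix entries of elements of $J$ is a two-sided $k$-ideal of $A$ reproducing $J$, or observes that we only need the correspondence when $J$ is primitive, in which case $\Mat_n(A)/J$ is a unital full matrix algebra from which the required idempotents can be lifted after the fact.
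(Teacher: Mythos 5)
Your argument for part (i) is essentially the paper's, repackaged: the paper passes to $A/J \cong \Mat_r(\CC)$ and observes that $(A/J)\otimes_\CC\Mat_n(\CC)\cong\Mat_{rn}(\CC)$, which is just the primitive-ideal side of your ``apply $\Mat_n$ to the surjection $\varphi$'' computation. Your injectivity argument for (ii) --- identify the annihilator of $W\otimes_\CC\Mat_{n,1}(\CC)$ as $\Mat_n(\ker\varphi_W)$, then use that $A/\ker\varphi_W$ is a full matrix algebra --- is also sound and is left implicit in the paper.

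The surjectivity half is where the gap lies, and you have correctly put your finger on exactly the point that breaks. The ideal correspondence $I\leftrightarrow \Mat_n(I)$ for $\Mat_n(A)$ really does require the matrix units $E_{ij}$ to live in $\Mat_n(A)$ (so that one can conjugate an element of $J$ and read off its entries); when $A$ is non-unital, those $E_{ij}$ are absent, and the correspondence fails for general non-unital rngs. Neither of your two proposed fixes resolves this: ``checking by hand'' that the set of matrix entries of elements of $J$ is an ideal of $A$ reproducing $J$ is precisely the step that needs the $E_{ij}$, and idempotents cannot in general be lifted from an arbitrary quotient $\Mat_n(A)/J$ back to $\Mat_n(A)$. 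The paper's proof (carried out in the proof of Proposition 8.5, which Lemma 8.4 supports) takes a different route: given an irreducible $\Mat_n(A)$-module $V$ with annihilator $J$, it exploits the strict non-degeneracy of $V$ to build a $\Mat_n(\CC)$-action on $V$ ``for free,'' exactly as in Lemma~\ref{k-action} --- for $v=\sum\alpha_i v_i$ with $\alpha_i\in\Mat_n(A)$, set $\theta v:=\sum(\theta\alpha_i)v_i$ for $\theta\in\Mat_n(\CC)$, well-defined because the annihilator in $V$ is zero. With those matrix units now acting on $V$ (rather than sitting inside $\Mat_n(A)$), one decomposes $V=\bigoplus_i E_{ii}V\cong (E_{11}V)\otimes_\CC\CC^n$ with $E_{11}V$ an irreducible $A$-module, and its annihilator $I$ satisfies $J=I\otimes_\CC\Mat_n(\CC)$. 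That auxiliary $\Mat_n(\CC)$-module structure on $V$ is the missing ingredient: it is what makes the ideal-correspondence conclusion you want actually hold for primitive $J$, and your proof needs it to be closed.
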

 \noindent\underline{Proof.} For (i), suppose given an irreducible representation $W$ of $A$. Let $J$ be the primitive ideal in $A$ which is the null space of $W$. Then the null space of   $W\otimes_{\mathbb{C}} \Mat_{n, 1}(\mathbb{C})$  
 is $J\otimes_{\mathbb{C}}\Mat_n(\mathbb{C}).$Consider  the quotient algebra\\
  $A\otimes_{\mathbb{C}}\Mat_n(\mathbb{C})/J\otimes_{\mathbb{C}}\Mat_n(\mathbb{C}) = (A/J)\otimes_{\mathbb{C}}\Mat_n(\mathbb{C})$.
  This is isomorphic to $\Mat_{rn}(\mathbb{C})$ where $A/J\cong \Mat_r(\mathbb{C})$, and so $W\otimes_{\mathbb{C}}\Mat_{n, 1}(\mathbb{C})$ is irreducible.

 \begin{proposition} Let $A$ be a finite type $k$-algebra and let $n$ be a positive integer, then $A$ and $\Mat_n(A)$ are stratified equivalent. 
 \end{proposition}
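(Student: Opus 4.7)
The plan is to construct a single morphism of $k$-algebras $f\colon A \to \Mat_n(A)$ that is spectrum preserving (in the unfiltered sense). This suffices: taking the trivial filtrations $0 \subset A$ and $0 \subset \Mat_n(A)$ makes $f$ spectrum preserving with respect to filtrations, and Elementary Step 1 then yields $A \sim \Mat_n(A)$ directly, bypassing any need to discuss local units or unitization.

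For $f$, I take the corner embedding $f(a) = e_{11} \otimes a$, i.e., the $n \times n$ matrix with $a$ in position $(1,1)$ and zeros elsewhere. This is manifestly a $k$-algebra homomorphism whether or not $A$ is unital. To verify it is spectrum preserving, I would first establish that every primitive ideal of $\Mat_n(A)$ has the form $\Mat_n(J)$ for a unique primitive ideal $J$ of $A$. This follows from the preceding lemma: by part (i), the null space of the irreducible $\Mat_n(A)$-module $W \otimes_{\mathbb{C}} \Mat_{n,1}(\mathbb{C})$ is $\Mat_n(J)$, where $J$ is the null space of $W$ as an $A$-module; by part (ii) combined with the bijection $\Irr \leftrightarrow \Prim$ for finite type $k$-algebras, the assignment $\Mat_n(J) \mapsto J$ is a bijection $\Prim(\Mat_n(A)) \to \Prim(A)$. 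A direct matrix computation gives $f^{-1}(\Mat_n(J)) = J$, and since primitive ideals in a finite type $k$-algebra are maximal, $J$ is the unique primitive ideal of $A$ containing $f^{-1}(\Mat_n(J))$. Both clauses of the definition of \emph{spectrum preserving} are thereby met, and the map $\Prim(\Mat_n(A)) \to \Prim(A)$ produced by the definition coincides with $\Mat_n(J) \mapsto J$.

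The main obstacle lurks in the preceding lemma itself: the excerpt only proves part (i), whereas the argument above needs part (ii) in full, particularly its surjectivity clause --- every primitive ideal of $\Mat_n(A)$ is indeed of the form $\Mat_n(J)$. I would dispatch this by invoking the classical bijection $I \leftrightarrow \Mat_n(I)$ between two-sided ideals of $A$ and of $\Mat_n(A)$ (which is valid for non-unital rings, since $\Mat_n(A) \cong A \otimes_{\mathbb{C}} \Mat_n(\mathbb{C})$ and matrix units in $\Mat_n(\mathbb{C})$ can always be used to isolate entries), together with the identification $\Mat_n(A)/\Mat_n(J) \cong \Mat_n(A/J)$, which is a simple matrix algebra over $\mathbb{C}$ precisely when $A/J$ is. Once this is in hand, the rest of the proof is formal and the conclusion via Elementary Step 1 is immediate.
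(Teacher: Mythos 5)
Your overall strategy is the same as the paper's: exhibit a single morphism of $k$-algebras $f\colon A\to\Mat_n(A)$ that is spectrum preserving, take the trivial filtrations, and invoke Elementary Step 1. Your choice of the corner embedding $a\mapsto e_{11}\otimes a$ instead of the paper's diagonal embedding $a\mapsto\mathrm{diag}(a,\dots,a)$ is harmless --- for either map one computes $f^{-1}(J\otimes_{\C}\Mat_n(\C))=J$, so the spectrum-preserving verification proceeds identically.

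The gap is in the parenthetical where you claim the bijection $I\leftrightarrow\Mat_n(I)$ between two-sided ideals of $A$ and of $\Mat_n(A)$ is ``valid for non-unital rings, since\dots matrix units in $\Mat_n(\C)$ can always be used to isolate entries.'' This is false in general. A two-sided ideal $L$ of $\Mat_n(A)$ is closed under multiplication by $\Mat_n(A)$, but when $A$ is non-unital the matrix units $E_{ij}\in\Mat_n(\C)$ do not lie in $\Mat_n(A)$, so there is no reason $L$ should be closed under the separate $\Mat_n(\C)$-bimodule action. Concretely, if $A$ is the finite type $\C$-algebra of strictly upper-triangular $2\times 2$ complex matrices (so $A^2=0$), then $\Mat_n(A)^2=0$ and every $\C$-subspace of $\Mat_n(A)$ is a two-sided ideal; almost none of these have the form $\Mat_n(I)$. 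And the non-unital case is precisely the one the proposition is meant to cover --- when $A$ is unital one already has Morita equivalence and Proposition \ref{Morita}.

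What is true is that every \emph{primitive} ideal of $\Mat_n(A)$ has the form $J\otimes_{\C}\Mat_n(\C)$ with $J$ primitive in $A$, but establishing this is exactly the nontrivial content of the paper's proof, and it cannot be obtained from a blanket ideal correspondence. The paper argues module-theoretically: an irreducible $\Mat_n(A)$-module $V$ is strictly non-degenerate, so one can equip it with a canonical $\Mat_n(\C)$-action by the same device as the ``$k$-action for free'' lemma (choose a presentation $v=\sum\alpha_i v_i$ with $\alpha_i\in\Mat_n(A)$ and set $\theta v=\sum(\theta\alpha_i)v_i$, using strict non-degeneracy for well-definedness). With that action in hand, $V$ decomposes as $E_{11}V\oplus\cdots\oplus E_{nn}V$ over $A$, one shows $V\cong(E_{11}V)\otimes_{\C}\C^n$, and the null space of $V$ is then $J\otimes_{\C}\Mat_n(\C)$ where $J$ is the null space of the irreducible $A$-module $E_{11}V$. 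If you replace your parenthetical shortcut with this argument (or an equivalent one), the rest of your proof goes through.
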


\begin{proof} Let   $f\colon A\rightarrow \Mat_n(A)$ be the morphism of $k$-algebras which maps $a\in A$ to the diagonal matrix
\begin{equation*}
   \left[
   \begin{matrix}
   a  &  0  & \dots  &  0\\
   0   &  a  & \dots  &  0\\
   \vdots  &  \vdots &  \ddots  &  \vdots\\
   0  &  0  &  \dots  & a
   \end{matrix}
   \right]
   \end{equation*}
 It will suffice to prove that  $f\colon A\rightarrow \Mat_n(A)$ is spectrum preserving. \\
 Let $J$ be an ideal in $A$. Denote by $J^{\Diamond}$ the ideal in 
 $\Mat_n(A)$ consisting of all $[a_{i j}]\in \Mat_n(A)$ such that each $a_{ij}$ is in $J$. Equivalently, $\Mat_n(A)$ is $A\otimes_{\mathbb{C}}\Mat_n(\mathbb{C})$
 and $J^{\Diamond}=J\otimes_{\mathbb{C}}\Mat_n(\mathbb{C})$. It will suffice to prove 
 \begin{enumerate}
 \item If $J$ is a primitive ideal in $A$, then $J^{\Diamond}$ is a primitive ideal in $\Mat_n(A)$.
 \item If $L$ is any primitive ideal in $\Mat_n(A)$, then there is a primitive ideal $J$ in $A$ with  $L=J^{\Diamond}$.  
 \end{enumerate}
 For (1), $J$ primitive $\Longrightarrow  J^{\Diamond}$ primitive,  because the quotient algebra
  $\Mat_n(A)/J^{\Diamond}$  is $(A/J)\otimes_{\mathbb{C}}\Mat_n(\mathbb{C})$ which is (isomorphic to) $\Mat_{rn}(\mathbb{C})$ where $A/J\cong \Mat_r(\mathbb{C})$.
\vspace{2mm}\\
For (2), since $\mathbb{C}$ is commutative, the action of $\mathbb{C}$ on $A$ can be viewed as both a left and right action. Matrix multiplication then gives 
a left and a right action of $\Mat_n(\mathbb{C})$ on $\Mat_n(A)$\vspace{2mm}\\
\[
\Mat_n(\mathbb{C})\times \Mat_n(A)\rightarrow \Mat_n(A)
\]
\[
\Mat_n(A)\times \Mat_n(\mathbb{C})\rightarrow \Mat_n(A)
\]

\noindent for which the associativity rule
\[
(\alpha\theta)\beta = \alpha(\theta\beta)\hspace{10mm}\alpha, \beta\in \Mat_n(A)\hspace{5mm}\theta\in \Mat_n(\mathbb{C})
\]
is valid.\\
If $V$ is any representation of $\Mat_n(A)$, the associativity rule 
\[
(\alpha\theta)(\beta v) = \alpha[(\theta\beta)v]\hspace{10mm}\alpha, \beta\in \Mat_n(A)\hspace{5mm}\theta\in \Mat_n(\mathbb{C})\hspace{5mm} v\in V
\]
is valid.\vspace{2mm}\\
Now let $V$ be an irreducible representation of $\Mat_n(A)$, with $L$ as its null-space. Define a (left) action
\[
\Mat_n(\mathbb{C})\times V\rightarrow V
\]
of $\Mat_n(\mathbb{C})$ on $V$ by proceeding as in the proof of Lemma 4.2  (the ``$k$-action for free" lemma) i.e. given $v\in V$, choose  $v_1,v_2,\ldots,v_r\in V$ and $\alpha_1,\alpha_2,\ldots,\alpha_r\in \Mat_n(A)$ 
with 
\[
v=\alpha_1v_1+\alpha_2v_2+\cdots+\alpha_rv_r  
\]
For $\theta\in \Mat_n(\mathbb{C})$, define $\theta v$ by :
\[
\theta v=(\theta \alpha_1)v_1+(\theta \alpha_2)v_2+\cdots+(\theta \alpha_r)v_r
\]
The strict non-degeneracy, Lemmas 4.1 and 4.3, of $V$ implies that $\theta v$ is well-defined as follows. Suppose that  $u_1,u_2,\ldots,u_s\in V$ and $\beta_1,\beta_2,\ldots,\beta_r\in \Mat_n(A)$ 
are chosen with 
\[
v=\alpha_1v_1+\alpha_2v_2+\cdots+\alpha_rv_r  = \beta_1u_1+\beta_2u_2+\cdots+\beta_su_s 
\]
If $\alpha$ is any element of $\Mat_n(A)$, then\\
$\alpha [(\theta \alpha_1)v_1+(\theta \alpha_2)v_2+\cdots+(\theta \alpha_r)v_r-(\theta \beta_1)u_1-(\theta\beta_2)u_2-\cdots-(\theta\beta_s)u_s] =$
$(\alpha\theta) [\alpha_1v_1+\alpha_2v_2+\cdots+\alpha_rv_r-\beta_1u_1-\beta_2u_2-\cdots-\beta_su_s] = (\alpha\theta)[v-v] = 0$\vspace{2mm}\\
Use $f\colon A\rightarrow \Mat_n(A)$ to make $V$ into an $A$-module
\[
av:= f(a)v\hspace{16mm}a\in A\hspace{6mm} v\in V
\]
The actions of $A$ and $\Mat_n(\mathbb{C})$ on $V$ commute. Thus for each $\theta\in \Mat_n(\mathbb{C}), \theta V$ is a sub-A-module of $V$, where 
$\theta V$ is the image of $v\mapsto\theta v$. Denote by $E_{ij}$ the matrix in $\Mat_n(\mathbb{C})$ which has $1$ for its $(i,j)$ entry and zero for all its other
entries. Then, as an $A$-module, $V$ is the direct sum
\[
V = E_{11}V\oplus E_{22}V\oplus\cdots\oplus E_{nn}V
\]
Moreover, the action of $E_{ij}$ on $V$ maps $E_{jj}V$ isomorphically (as an $A$-module) onto $E_{ii}V$. Hence as an $\Mat_n(A) = A\otimes_{\mathbb{C}}\Mat_n(\mathbb{C})$
module, V is isomorphic to $(E_{11}V)\otimes_{\mathbb{C}}\mathbb{C}^n$ --- where $\mathbb{C}^n$ is the standard representation of $\Mat_n(\mathbb{C})$ i.e. is the unique 
irreducible representation of $\Mat_n(\mathbb{C})$.
\[
V\cong (E_{11}V)\otimes_{\mathbb{C}}\mathbb{C}^n
\]
$E_{11}V$ is an irreducible  $A$-module since if not $V=(E_{11}V)\otimes_{\mathbb{C}}\mathbb{C}^n$ would not be an irreducible $A\otimes_{\mathbb{C}}\Mat_n(\mathbb{C})$-module.\\
If $J$ is the null space (in $A$) of $E_{11}V$, then $J^{\Diamond} = J\otimes_{\mathbb{C}}\Mat_n(\mathbb{C})$ is the null space of $V= (E_{11}V)\otimes_{\mathbb{C}}\mathbb{C}^n$
and this completes the proof.      
\end{proof}

\section{Permanence properties}

\begin{theorem}   Denote by $\Gamma$ a finite group.   Stratified equivalence persists under the formation of tensor products 
$A \otimes B$,  and the 
formation of crossed products $A \rtimes \Gamma$.
\end{theorem}

\begin{proposition}Given $k_1$-algebras $A \sim A'$ and $k_2$-algebras $B \sim B'$ then we have
\[
A \otimes B \sim A' \otimes B'
\]
as $k_1 \otimes k_2$-algebras, all tensor products over $\C$.     
\end{proposition} 
\begin{proof}  We first prove that $A \otimes B \sim A' \otimes B$.   Let $\{I_j\}$ be the filtration for $A$, and $\{I'_j\}$ be the filtration for $A'$.
Then $\{I_j \otimes B\}$ (resp. $\{I'_j \otimes B\}$ will serve as filtrations for $A \otimes B$, (resp. $A' \otimes B$), in the sense that the maps
\[
I_j/I_{j-1} \otimes B \to I'_j/I'_{j-1} \otimes B
\]
are spectrum-preserving.   A similar argument will prove that $A' \otimes B \sim A' \otimes B'$.   Then we have
\[
A \otimes B \sim A' \otimes B \sim A' \otimes B'
\]
as required.

Let $\Psi$ be a variation of $k_1$-structure for $A$:
\[
k_1 \to Z(A[t,t^{-1}]) 
\]
and let
\[
\Psi_B : k_2 \to Z(B)
\]
be the $k_2$-structure for $B$.   Then 
\[
\Psi \otimes \Psi_B : k_1 \otimes k_2 \to Z(A[t,t^{-1}]) \otimes Z(B) \simeq Z(A \otimes B)[t,t^{-1}]
\]
will be a variation of $k_1 \otimes k_2$-structure for $A \otimes B$.  
\end{proof}

\begin{lemma}\label{phiGamma}  Assume that $A$ and $B$ both have $\Gamma$ acting (as automorphisms of $k$-algebras), and we are given a 
$\Gamma$-equivariant morphism $\phi: A \to B$ which is spectrum preserving.   Then $\phi \rtimes \Gamma:
A \rtimes \Gamma \to B \rtimes \Gamma$ is a spectrum preserving morphism of $k$-algebras.  
\end{lemma}

\begin{proof}   According to Lemma A.1 in \cite{ABPS}, $\Irr(A \rtimes \Gamma), \Irr(B \rtimes \Gamma)$ are in bijection with the (twisted) extended quotients: 
\[
(\Irr(A) \q \Gamma)_{\natural} \simeq \Irr(A \rtimes \Gamma)
\]
\[ 
 (\Irr(B) \q \Gamma)_{\natural} \simeq \Irr(B \rtimes \Gamma)
\]

Since $\phi : A \to B$ is spectrum preserving, the $2$-cocycles $\natural$ of $\Gamma$ associated to $\Irr(A)$ and $\Irr(B)$ can be identified.   Hence 
$\phi$ determines a bijection of the (twisted) extended quotients 
\[
(\Irr(A) \q \Gamma)_{\natural} \longleftarrow (\Irr(B) \q \Gamma)_{\natural} 
\]
Applying Lemma A.1 of \cite{ABPS}, this implies that $\phi \rtimes \Gamma$ is spectrum preserving.

\end{proof}

\begin{proposition}\label{crossed} If $A$ and $B$ both have $\Gamma$ acting (as automorphisms of $k$-algebras), and we are given a 
$\Gamma$-equivariant stratified equivalence $A \sim B$, then we have 
\[
A \rtimes \Gamma \sim B \rtimes \Gamma
\]
\end{proposition}

\begin{proof}  Suppose now that we are given a $\Gamma$-equivariant morphism of $k$-algebras $\phi : A \to B$ which is, in a $\Gamma$-equivariant way,
spectrum preserving with respect to filtrations, i.e. the filtrations of $A$ and $B$ are preserved by the action of $\Gamma$.  This gives a map of $k$-algebras 
$\phi \rtimes \Gamma : A \rtimes \Gamma$ to $B \rtimes \Gamma$.   We claim that this map is spectrum-preserving with respect to filtrations.   

  Using the filtrations,
consider the map $I_j/I_{j-1} \to J_j / J_{j-1}$. This is a $\Gamma$-equivariant spectrum preserving map.   Hence by Lemma \ref{phiGamma} above, the map
\[
(I_j/I_{j-1}) \rtimes \Gamma \to (J_j / J_{j-1} )\rtimes \Gamma
\]
is spectrum preserving.   So the filtrations of $A \rtimes \Gamma$ and $B \rtimes \Gamma$ given by $I_j\rtimes \Gamma$ and
$J_j \rtimes \Gamma$ are the required filtrations.
\medskip

Suppose given a $\Gamma$-equivariant variation of $k$-structure for $A$, i.e. a $\Gamma$-equivariant unital map
of $\C$-algebras
\[
k \longrightarrow Z(A[t,t^{-1}])
\]
where the $\Gamma$-action on $k$ is trivial.   Due to the triviality of this action, $k$ is mapped to 
\[
Z(A[t,t^{-1}])^{\Gamma} = (Z(A))^\Gamma [t,t^{-1}]
\]
By the standard inclusion 
\[
(Z(A))^\Gamma \subset Z(A \rtimes \Gamma)
\]
$k$ is mapped to $Z(A \rtimes \Gamma)[t,t^{-1}]$:  
\[
k \longrightarrow Z(A \rtimes \Gamma)[t, t^{-1}]
\]
which is  the required variation of $k$-structure for the crossed product $A \rtimes \Gamma$.  
\end{proof}

\section{Affine Hecke Algebras}

 Let $(X,Y,R,R^\vee)$ be a root 
datum in the standard sense \cite[p.73]{L}.   This root datum delivers the following items: 

-- a finite Weyl group $W_0$

--  an extended affine Weyl group $W_0X: = W_0 \ltimes X$

-- for each $q \in \C^\times$, an affine Hecke algebra $\cH_q$

-- a complex torus $T: = \Hom(X, \C^\times)$   

-- a complex variety $X: = T/W_0$

-- a canonical isomorphism   $\cO(X) \simeq Z(\cH_q)$
\medskip

The detailed construction of $\cH_q$ is described in the article of Lusztig \cite[p.74]{L}.   Set $k: = \cO(X)$.   Then, for all $q \in\mathbb{C}^{\times}$, $\mathcal{H}_q$ is a unital finite type $k$-algebra.   
If $q = 1$, then $\cH_1$ is the group algebra of the extended affine Weyl group:
\[
\cH_1 = \C[W_0X]
\]

\subsection{Stratified equivalence for affine Hecke algebras} 

\begin{theorem}\label{roots} Except for $q$ in a finite set of roots of unity,
none of which is 1,  $\mathcal{H}_q$ is stratified equivalent to $\mathcal{H}_1$ :
\[
\mathcal{H}_q \sim\mathcal{H}_1.
\]
\end{theorem}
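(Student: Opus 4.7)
The plan is to exhibit Lusztig's asymptotic Hecke algebra $J$ as a common intermediary between $\cH_q(G)$ and $\cH_1(G)$. By constructing morphisms of $k$-algebras $\cH_q(G) \to J_{\mathbb{C}}$ and $\cH_1(G) \to J_{\mathbb{C}}$, each spectrum preserving with respect to filtrations, two applications of Elementary Step 1 would yield the stratified equivalence $\cH_q(G) \sim \cH_1(G)$.

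First, by the Bernstein-Lusztig description of the center, $Z(\cH_q(G)) = \mathcal{O}(T/W) = k$ for every $q \in \mathbb{C}^{\times}$, so each $\cH_q(G)$ is a unital finite type $k$-algebra. The two-sided Kazhdan-Lusztig cells of $\widetilde{W} = X^*(T) \rtimes W$ partition $\widetilde{W}$ into finitely many subsets, and the Lusztig preorder on cells produces, in both $\cH_q(G)$ and $\cH_1(G)$, an increasing chain of two-sided $k$-ideals whose subquotients are indexed by individual cells. Lusztig's asymptotic algebra $J$, built from the leading coefficients of Kazhdan-Lusztig polynomials, is similarly filtered by cells; each subquotient of $J_{\mathbb{C}} = J \otimes_{\mathbb{Z}} \mathbb{C}$ is a direct sum of matrix algebras over coordinate rings of suitable quotients of subtori of $T$, and in particular $J_{\mathbb{C}}$ is itself a finite type $k$-algebra.

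The central ingredient is Lusztig's homomorphism $\phi_q \colon \cH_q(G) \to J_{\mathbb{C}}$, which respects the cell filtrations. Using Lusztig's geometric parameterization of $\Irr(\cH_q(G))$, via Deligne-Langlands data $(s,u,\rho)$ with $sus^{-1} = u^q$, reducing at $q=1$ to the classical Springer correspondence, one verifies that the map induced by $\phi_q$ on each cell subquotient is a bijection on irreducibles, provided $q$ avoids a specific finite set of roots of unity. Since the Springer correspondence at $q=1$ is non-degenerate, $1$ is not in the bad set. Hence both $\phi_q$ and $\phi_1$ are spectrum preserving with respect to filtrations, giving the required chain of elementary steps $\cH_q(G) \to J_{\mathbb{C}} \leftarrow \cH_1(G)$.

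The main obstacle is verifying the spectrum-preserving property on every cell subquotient. This demands substantial Kazhdan-Lusztig input: identifying irreducibles in each subquotient with Lusztig's geometric data, controlling how these data vary in families as $q$ moves along $\mathbb{C}^{\times}$, and pinpointing the precise finite set of roots of unity at which the bijection with the $q=1$ parameterization breaks down. The exclusion of $q=1$ from the bad set, crucial to the conclusion, reflects the regularity of the Springer correspondence.
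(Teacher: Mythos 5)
Your overall approach—routing through Lusztig's asymptotic algebra $J$ and invoking Lusztig's cell filtrations and his homomorphism $\phi_q$—is the same as the paper's. However, there is a genuine gap: you treat $J$ (or $J_{\mathbb C}$) as carrying a single fixed $k$-algebra structure and conclude with only two applications of Elementary Step 1, via $\cH_q(G) \to J_{\mathbb C} \leftarrow \cH_1(G)$. This does not work as stated, because the $k$-module structure that $J$ inherits depends on $q$. The map $\phi_q$ carries $Z(\cH_q(G)) \cong \cO(T/W) = k$ into $Z(J)$, and this determines the unique $k$-structure on $J$ making $\phi_q$ a morphism of $k$-algebras; call the result $J_q$. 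For $q \neq 1$ the $k$-algebras $J_q$ and $J_1$ have the same underlying $\mathbb C$-algebra but \emph{different} $k$-actions, so $\phi_q$ and $\phi_1$ do not land in a common $k$-algebra, and two applications of Elementary Step 1 do not compose into a stratified equivalence between $\cH_q(G)$ and $\cH_1(G)$.

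The paper closes exactly this gap by inserting a third elementary step: the family $\{J_\zeta\}_{\zeta \in \mathbb C^\times}$ is an algebraic variation of $k$-structure with parameter space $\mathbb C^\times$, so Elementary Step 2 gives $J_q \sim J_1$. The full chain is $\cH_q(G) \rightsquigarrow J_q \rightsquigarrow J_1 \rightsquigarrow \cH_1(G)$, of which the middle arrow is precisely the kind of move your proposal has no mechanism to supply. To repair your argument, you must either (a) prove that the $k$-structures induced by $\phi_q$ and $\phi_1$ on $J$ coincide (they do not, in general, since a Morita equivalence would then preserve the central character, contradicting what is known for affine Hecke algebras), or (b) explicitly invoke Elementary Step 2 with the algebraic variation of $k$-structure on $J$.
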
  

\begin{proof}  
 Let $J$ be Lusztig's asymptotic algebra \cite[\S 8]{L}.
As a
$\mathbb{C}$-vector space, $J$ has a basis
$\{t_v\,:\,v \in W_0X \}$, and there is a canonical structure of
associative $\mathbb{C}$-algebra on $J$.  The algebra $\mathcal{H}_q$ 
is viewed as a $k$-algebra via the canonical isomorphism
\[
\mathcal{O}(T/W_0)\cong Z(\mathcal{H}_q) .
\]
Lusztig's map $\phi_q$ maps $Z(\mathcal{H}_q)$ to $Z(J)$ by Proposition 1 in \cite{ABP}
and thus determines a unique $k$-structure for $J$ such that the map
$\phi_q$ is a morphism of $k$-algebras. $J$ with this $k$-structure 
will be denoted $J_q$.

Let 
\[
a : W_0X \to \mathbb{N} \cup \{0\}
\]
 be the weight function defined by Lusztig. 

The algebra $J$ has a unit element of the form $1 = \sum_{d \in \mathcal{D}}$ where $\mathcal{D}$ is a certain set of involutions in $W_a$.   
For any $q \in \C^\times$, the $\C$-linear map $\psi_q : \mathcal{H}_q \to J$ defined by
\begin{eqnarray}\label{Lusztig}
\phi_q(C_w) = \sum h_{w,d,v} t_v
\end{eqnarray}
is a $\C$-linear homomorphism preserving $1$.    The summation is over the set
\begin{eqnarray*}
\{d \in \mathcal{D}, v \in W_0X : a(v) = a(d)\}.
\end{eqnarray*}

The elements $C_w$ are defined in \cite[p.82]{L} and  form a $\C$-basis of $\mathcal{H}_q$.   The coefficients $h_{w,d,v}$ originate in the multiplication rule for
the $C_w$, see \cite[p.82]{L}.

The map $\phi_q$ is injective. Thus all algebras $\mathcal{H}_q$ with $q \in \C^\times$ appear as subalgebras of a single $\C$-algebra  $J$.

 Define
 \begin{eqnarray*}
 J_k  &  = & \textrm{ideal generated  by} \, \{t_v \in J : a(v) \leq k\}\\ 
  I_k & = & \phi^{-1}_q(J_k)
 \end{eqnarray*}
 
 We have the filtrations
 \begin{eqnarray}\label{Filter}
 0 \subset I_1 \subset I_2 \subset \cdots \subset I_r & = & \mathcal{H}_q
 \end{eqnarray}
 \begin{eqnarray*}
  0 \subset J_1 \subset J_2 \subset \cdots \subset J_r &  = & J
 \end{eqnarray*}
 We have
 \begin{eqnarray*}
 \Irr(J) & = & \bigsqcup \Irr(J_k/J_{k-1})\\
 \Irr(\mathcal{H}_q) & = & \bigsqcup \Irr(I_k/I_{k-1})
 \end{eqnarray*}
 
 Let $M$ be a simple $\mathcal{H}_q$-module (resp. simple $J$-module).   The \emph{weight} $a_M$ of $M$ is defined by Lusztig in \cite[p.82]{L}.
 Let $\Irr(\mathcal{H}_q)_k$ (resp. $\Irr(J)_k$) denote the set of simple $\mathcal{H}_q$-modules (resp. simple $J$-modules) of weight $k$.   
 It follows from Lusztig's definition that we have
 \begin{eqnarray*}
 \Irr(J_k/J_{k-1}) &  = &  \Irr(J)_k\\
 \Irr(I_k/I_{k-1}) & = & \Irr(\mathcal{H}_q)_k
 \end{eqnarray*}
 
 \bigskip

 Consider the commutative diagram
 \[
 \begin{CD}
 \mathcal{H}_q  @ >\phi_q>> J\\
 @AAA              @AAA\\
 I_k @ >\phi_q|I_k>> J_k\\
 @VVV @VVV\\
 I_k/I_{k-1} @>>(\phi_q)_k> J_k/J_{k-1} 
 \end{CD}
 \]
 
 \medskip
 
  The middle horizontal map is the restriction of $\phi_q$ to $I_k$.   This map is 
  defined by equation (\ref{Lusztig}) but with summation over the set
  \begin{eqnarray*}
\{d \in \mathcal{D}, v \in W_0X : a(v) = a(d) \leq k\}.
\end{eqnarray*}

The bottom horizontal map is middle horizontal map after descent to the quotient $I_k/I_{k-1}$.   This map is 
  defined by equation (\ref{Lusztig}) but with summation over the set
  \begin{eqnarray*}
\{d \in \mathcal{D}, v \in W_0X : a(v) = a(d) = k\}.
\end{eqnarray*}

We now apply one of Lusztig's main theorems, Theorem 8.1 in \cite{L}:  Assume that $q \in \C^\times$ is either $1$ or is not a root of $1$. 
Let $M'$ be a simple $J$-module of weight $a_{M'}$.   The pre-image $\phi^{-1}_q(M')$ will contain a 
unique subquotient $M$ of maximal weight $a_M = a_{M'}$.   Then the map $M' \mapsto M$ is a bijection from the set of simple $J$-modules (up to isomorphism) to the set of simple
$\mathcal{H}_q$-modules (up to isomorphism).     

We reconcile our construction with Lusztig's bijection by observing that our map 
\[
(\phi_q)_k :  I_k/I_{k-1} \to J_k/J_{k-1}
\]
determines a map 
\begin{eqnarray}\label{weight}
\Irr(J)_k \to \Irr(\mathcal{H}_q)_k
\end{eqnarray}
This map is weight-preserving, by construction,  and coincides with the Lusztig bijection.   The map (\ref{weight}) is spectrum-preserving by Lusztig's theorem, 
and so the original map $\phi_q$ is spectrum preserving with 
respect to the  filtrations (\ref{Filter}).

  \bigskip

 $\mathcal{H}_q$ is then stratified 
equivalent to $\mathcal{H}_1$ by the three elementary steps
\[
\mathcal{H}_q  \rightsquigarrow J_q \rightsquigarrow J_1 \rightsquigarrow \mathcal{H}_1 .
\]
The second elementary step (i.e. passing from $J_q$ to $J_1$) is an algebraic 
variation of $k$-structure with parameter space $\mathbb{C}^{\times}$. The first elementary step uses Lusztig's map $\phi_q$, and the 
third elementary step uses Lusztig's map $\phi_1$.
Hence (provided $q$ is not in the exceptional set
of roots of unity---none of which is 1) $\mathcal{H}_q$ is stratified  equivalent to 
\[
\mathcal{H}_1
= \C [W_0X]=\mathcal{O}(T)\rtimes W_0
\]
\end{proof}

\begin{remark}   In Theorem \ref{roots}, the condition on $q$ can be replaced by the following more precise condition:
\[
\sum _{w \in W_0} q^{\ell(w)} \neq 0
\]
where $\ell: W_0X \to  \mathbb{N}$ is the length function, see \cite[Theorem 3.2]{Xi2}.    
\end{remark} 

 If $q \neq 1$ then $\mathcal{H}_q$ and $\mathcal{H}_1$ are not Morita equivalent as $k$-algebras, for the exceptional sets are not equal:
\[
q \neq 1 \implies \fE(\cH_q) \neq \fE(\cH_1)
\]

\subsection{The affine Hecke algebras of $\SL_2(\C)$}  If the root datum 
\[
(X,Y,R,R^\vee)
\]
 arises from a connected complex reductive Lie group $G$, then we will write
$\cH_q(G) = \cH_q$.   

An interesting situation arises for $\cH_q(\SL_2(\C))$.   
Let $s,t$ be the simple reflections in the (extended) affine Weyl group $W_0X$.   When $q + 1 \neq 0$ there is an 
 isomorphism of $\C$-algebras between $\cH_q$ and $\cH_1$:
\[
\cH_q \simeq \cH_1
\]
\[ 
T_s \mapsto \frac{q+1}{2} s + \frac{q-1}{2}, \quad \quad T_t \mapsto \frac{q+1}{2} t + \frac{q-1}{2}
\]

On the other hand, when $q \neq 1$, $\cH_q$ and $\cH_1$ cannot be isomorphic as $k$-algebras, for the exceptional sets are different.  
Let $T$ denote the standard maximal torus in $\SL_2(\C)$:
\[
\left\{
\left(
\begin{array}{cc}
z & 0 \\
0 & 1/z
\end{array}
\right)
: z \neq 0
\right\}
\]   
We have $W_0 = \Z/2\Z$,  $k = \cO(T/W_0)$  the algebra of $\Z/2\Z$-invariant Laurent polynomials in one variable.   Note that the 
non-trivial element of $W$ acts by 
\[
\left(
\begin{array}{cc}
z & 0 \\
0 & 1/z
\end{array}
\right)
\mapsto 
\left(
\begin{array}{cc}
1/z & 0 \\
0 & z
\end{array}
\right)
\]

The quotient 
variety $T/W_0$ comprises unordered pairs $\{z_1, z_2\}$ of nonzero complex numbers which satisfy the equation
 $z_1z_2 = 1$.   

The exceptional sets are  
\begin{eqnarray*}
\fE(\cH_1) & = & \{- 1, - 1\}  \sqcup \{1, 1\} \\
\fE(\cH_q) & = & \{- 1, -1\} \sqcup \{q, 1/q\}  
\end{eqnarray*}

As $q$ with $q \neq 1$ is deformed to $1$, the point $\{-1, -1\}$ stays fixed, and the point $\{q, 1/q\}$ moves to the point $\{1,1\}$.

 \subsection{The affine Hecke algebras of $\SL_3(\C)$}   The case of $\SL_3(\C)$ is considered in \S 11.7 of Xi \cite{Xi} where it is proved that
 $\cH_q$ and $\cH_1$ are not isomorphic as $\C$-algebras whenever $q \neq 1$.   As $k$-algebras, $\cH_q$ is not Morita equivalent to $\cH_1$
 whenever $q \neq 1$ because the exceptional set $\fE(\cH_q)$ with $q \neq 1$ is not the same as $\fE(\cH_1)$.  
 
  In contrast to this, except for a finite set of roots of unity (none of which is $1$), according to Theorem \ref{roots}, $\cH_q$ is stratified equivalent to $\cH_1$.

\end{document}